\newtheorem{theorem}{Theorem}[section]
\newtheorem{lemma}{Lemma}[section]
\theoremstyle{definition}
{}
\theoremstyle{remark} 
\newtheorem{remark}{Remark}[section]
\newcommand{\del}{\partial}
\newcommand{\delb}{\bar{\partial}}
\newcommand{\C}{{\mathbb{C}}}
\newcommand{\cF}{{\mathcal{F}}}
\newcommand{\pbp}{\mathcal{\partial \bar{\partial}}}
\newcommand{\spbp}{\mathcal{\sqrt{-1}\partial \bar{\partial}}}
\DeclareMathOperator{\Tr}{Tr}
\numberwithin{equation}{section}
\begin{document}
\title[A note on higher extremal metrics]{A note on higher extremal metrics}
\author{Vamsi Pritham Pingali}
\address{Department of Mathematics, Indian Institute of Science, Bangalore, India - 560012}
\email{vamsipingali@iisc.ac.in}
\begin{abstract}
In this paper we introduce ``higher extremal K\"ahler" metrics. We provide an example of the same on a minimal ruled surface. We also prove a perturbation result that implies that there are non-trivial examples of ``higher constant scalar curvature" metrics, which are basically metrics where the top Chern form is harmonic. We also give a relatively short proof of Liu's formula for the Bando-Futaki invariants (which are obstructions for the existence of harmonic Chern forms) of hypersurfaces of projective space. 
\end{abstract}
\maketitle
\section{Introduction}
The problem of finding K\"ahler-Einstein metrics and more generally, extremal K\"ahler metrics, is of active interest (for instance see \cite{Sz} and the references therein). Extremal metrics may be characterised as K\"ahler metrics for which the gradient of the scalar curvature (expressed as $S=\frac{n c_1 \wedge \omega ^{n-1}}{\omega^n}$) is a holomorphic vector field. Special cases of these are the constant scalar curvature K\"ahler (cscK)  metrics which we interpret as those metrics for which the first Chern form is harmonic \cite{Ban}.  \\
\indent The Chern classes are important objects in algebraic geometry. In addition to the classes, the first Chern-Weil form itself is quite natural to study because it is the Ricci form for a K\"ahler manifold. Indeed, the first Chern form was used by Yau to prove the Bogomolov-Miyaoka-Yau inequality as a consequence of the Calabi conjecture \cite{Yaucal}.  As Yau stated in \cite{Yaupers}, the higher Chern-Weil forms are quite mysterious. That being said, we note that at the level of classes the top Chern class is the Euler class. Therefore, studying the top Chern form might potentially lead to interesting consequences. We are thus led to study the following equation :
\begin{gather}
c_n (\omega) = \lambda \omega ^n
\label{maineq}
\end{gather}
where the gradient of $\lambda$ is a holomorphic vector field. We call these metrics \emph{higher extremal K\"ahler} and if $\lambda$ is a constant, i.e., the top Chern form is harmonic, then we dub them as \emph{higher constant scalar curvature (hcscK)}. \\
\indent The hcscK metrics and their avatars were considered earlier by Bando \cite{Ban} who came up with an obstruction for their existence. Another version of the higher extremal metrics was studied by Futaki \cite{Futp, Futpe} where he considered the perturbed scalar curvature $S(J,t) = \frac{c_1 + tc_2 + t^2 c_3 +\ldots}{\omega^n}$ where $t$ is a small real number. Our question is the case for large $t$ in a sense. So Futaki's results do not apply in any direct manner that the author can see.\\
\indent In this paper we study examples of higher extremal and hcscK metrics. Our first example comes from a minimal ruled surface. For the usual extremal K\"ahler metrics this example was first studied in \cite{TF} and more general results were proven in \cite{ACGT}. 
\begin{theorem}
Let $X$ be $\mathbb{P}(L\oplus \mathcal{O})$ where $L$ is a degree $-1$ line bundle over a genus $2$ surface $\Sigma$. Let $C$ be the Poincar\'e dual of any fibre and $S_{\infty}$ be the copy of $\Sigma$ corresponding to the line $L\oplus \{ 0 \}$. There exists a K\"ahler metric $\omega$ in the class $2\pi(C+S_{\infty})$ such that 
\begin{gather}
c_2 (\omega) = \frac{\lambda}{2(2\pi)^2} \omega^2, 
\end{gather}
where $\nabla ^{(1,0)} \lambda$ is a holomorphic non-zero vector field on $X$, i.e., it is higher extremal K\"ahler but not hcscK. 
\label{mainrule}
\end{theorem}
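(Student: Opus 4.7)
The plan is to use the Calabi-type momentum construction on the minimal ruled surface $X$, in the spirit of \cite{TF} and \cite{ACGT}. Fix the K\"ahler-Einstein metric $\omega_{\Sigma}$ of constant negative scalar curvature on the genus-$2$ curve $\Sigma$, with an appropriate normalization, and consider $S^{1}$-invariant K\"ahler metrics on $X$ of the momentum form
\begin{equation*}
\omega \;=\; (1 + \tau)\, p^{*}\omega_{\Sigma} \;+\; \frac{d\tau \wedge J\,d\tau}{\Theta(\tau)},
\end{equation*}
where $p \colon X \to \Sigma$ is the projection, $\tau \in [-1, 1]$ is the momentum coordinate for the vertical $S^{1}$-action rotating the fibres, and $\Theta$ is a positive momentum profile on $(-1, 1)$ satisfying the standard Calabi boundary conditions $\Theta(\pm 1) = 0$ and $\Theta'(\pm 1) = \mp 2$. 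These are precisely the conditions for $\omega$ to extend smoothly across the zero and infinity sections, and for the appropriate scaling of $\omega_{\Sigma}$ the resulting class is $2\pi(C + S_{\infty})$.

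Next, I would explicitly compute $\omega^{2}$ and $c_{2}(\omega)$ in terms of $\Theta$ and its derivatives. By the warped-product structure, $\omega^{2}$ is a multiple of $p^{*}\omega_{\Sigma}\wedge d\tau \wedge J\,d\tau$ with a simple polynomial coefficient in $\tau$. The Chern curvature of $\omega$ is block-diagonal with respect to the horizontal-vertical decomposition of $TX$, so $c_{2}(\omega)$ reduces essentially to the wedge of the individual Chern curvatures of the two line bundles appearing in the decomposition. The net outcome is an explicit formula $\lambda(\tau) = F(\tau, \Theta(\tau), \Theta'(\tau), \Theta''(\tau))$ depending only on the momentum coordinate $\tau$.

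Since $\lambda$ is a function of $\tau$ alone, its $(1,0)$-gradient is a function of $\tau$ times the vertical holomorphic vector field generating the $\C^{*}$-action on the fibres; hence $\nabla^{(1,0)}\lambda$ is automatically holomorphic for \emph{every} admissible $\Theta$. Any such $\Theta$ therefore produces a higher extremal K\"ahler metric in the required class $2\pi(C + S_{\infty})$.

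The main obstacle, and the only remaining task, is to exhibit an admissible $\Theta$ for which $\lambda$ is non-constant. Setting $\lambda$ equal to a constant becomes a second order ODE in $\Theta$ subject to four boundary conditions at $\tau = \pm 1$, hence over-determined, and its failure is controlled by a Bando-Futaki-type obstruction integral in the class $2\pi(C+S_{\infty})$. I would thus either pick a specific admissible polynomial $\Theta$ matching the four boundary data and verify by a direct computation that the resulting $\lambda$ is non-constant, or compute the Bando-Futaki obstruction in this class and check that it is non-zero. Either route closes the argument; the latter would additionally show that no metric in $2\pi(C+S_{\infty})$ can be hcscK.
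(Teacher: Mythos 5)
There is a genuine gap, and it sits exactly at the step you wave through. Your third paragraph claims that because $\lambda$ is a function of the momentum coordinate $\tau$ alone, $\nabla^{(1,0)}\lambda$ is ``automatically holomorphic for every admissible $\Theta$.'' This is false. One has $\nabla^{(1,0)}\lambda = \lambda'(\tau)\,\nabla^{(1,0)}\tau$, and in fibre coordinates $\nabla^{(1,0)}\tau = w\,\partial/\partial w$, which is indeed holomorphic; but multiplying it by the real-valued, non-holomorphic coefficient $\lambda'(\tau)$ destroys holomorphicity unless $\lambda'$ is constant. So higher extremality is not free within the ansatz: it forces $\lambda$ to be an affine function $\lambda = A\gamma + B$ of $\gamma = \tau + 1$, and substituting this into $c_2(\omega) = \tfrac{\lambda}{2(2\pi)^2}\omega^2$ yields (after one integration, introducing a third constant $C$) the first-order boundary value problem $(2\gamma+\phi)\phi' = A\tfrac{\gamma^4}{3} + B\tfrac{\gamma^3}{2} + C\gamma$ with $\phi(1)=\phi(m+1)=0$ and $\phi'(1)=-\phi'(m+1)=1$. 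This ODE is not integrable in closed form, and its solvability subject to the boundary conditions is the entire analytic content of the theorem. Your fallback plan---pick a polynomial profile $\Theta$ matching the four boundary data and check $\lambda$ is non-constant---produces a K\"ahler metric in the right class, but the resulting $\lambda(\tau)$ will generically have non-holomorphic gradient, so it is not a higher extremal metric at all. You have the overdetermination backwards: it is not that constant $\lambda$ is the hard overdetermined case while non-constant $\lambda$ comes for free; even the extremal (affine) case is an overdetermined problem that must be solved.

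For comparison, the paper handles this by a shooting argument: two boundary conditions determine $A(C), B(C)$ linearly in $C$; the substitution $v = (2\gamma+\phi)^2/2$ converts the ODE to $v' = 2\sqrt{2}\sqrt{v} + p(\gamma)\gamma$ with $v(1)=2$; lower bounds on $\int_1^{\gamma} p(t)t\,dt$ (via a careful analysis of the cubic $p$) guarantee long-time existence for $C$ in a suitable range; and then one exhibits one value of $C$ with $\phi(m+1)>0$ and (for $m=1$, via explicit numerical estimates at $C = 22/3$) another with $\phi(m+1)<0$, so an intermediate $C$ meets all four conditions. Note also that the non-hcscK assertion is proved there not by a Bando-Futaki computation, as you propose, but directly: setting $A=0$ pins down $B$ and $C$, and an elementary integral estimate gives $v(m+1) > 2(m+1)^2$, contradicting the boundary condition $v(m+1) = 2(m+1)^2$. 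A Futaki-type obstruction argument might be made to work, but you would have to carry it out; as written, your proposal establishes neither existence of the extremal metric nor non-existence of an hcscK one.
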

\begin{remark}
The aforementioned theorem does not assert that for $2\pi(C+m S_{\infty})$ where $m>1$ there are no extremal metrics. The author suspects that there might be a maximum $m$ (just as in the usual extremal K\"ahler case) beyond which there may not exist a solution. The proof is by reducing the equation to an ODE \footnote{It is a version of Chini's equation.} that unfortunately is not integrable and is non-autonomous. The analysis of the ODE is somewhat delicate. In contrast to the usual case \cite{TF} where the corresponding ODE always has a solution satisfying the desired boundary conditions (but it is not clear that the solution actually gives rise to a K\"ahler metric) in our case the difficulty lies with the existence of a solution to the ODE satisfying the boundary conditions. Also, the proof of theorem \ref{mainrule} shows that the assertion of the higher extremal metric not being hcscK is true regardless of $m$.
\label{afterthm}
\end{remark}
\indent In our quest to find more examples, we note that the hermitian symmetric spaces are hcscK. This is because their metric, curvature, and hence characteristic forms are constant linear combinations of invariant differential forms. Actually, in the case of a surface $X$ with ample canonical bundle Yau \cite{Yaucal} showed that if  $c_1^2=3c_2$ numerically, then indeed it admits hcscK metrics and that they are all K\"ahler-Einstein as well (by virtue of them being ball quotients). \\
\indent It is natural to wonder if there are non-trivial (i.e. not $X_1 \times X_2$ with product K\"ahler-Einstein metrics) examples of hcscK metrics. Also, near the symmetric K\"ahler-Einstein metrics are there any other hcscK metrics, i.e., does local uniqueness hold ? The following perturbation result addresses these questions in some cases.
\begin{theorem}
Suppose that either $(X,\omega)=(\mathbb{D}^1/\Gamma_1 \times \mathbb{D}^1/\Gamma_2,\pi_1^{*}\omega_1+\pi_2^{*}\omega_2)$ where $\omega_1, \omega_2$ are constant curvature metrics or $X=\mathbb{D}^2/\Gamma$ equipped with a metric $\omega$ of constant holomorphic sectional curvature. Suppose $\tilde{\omega}$ is any closed real $(1,1)$-form. There exists an $\epsilon_1, \epsilon_2>0$ such that for $\vert t \vert < \epsilon_1$ there exists a unique smooth function $\phi$ of zero average (with respect to $\omega$) depending smoothly on $t$ satisfying $\Vert \phi \Vert_{C^{4,\alpha}} < \epsilon_2$ such that $\omega + t \tilde{\omega} + \spbp \phi$ is hcscK.
\label{mainperturb}
\end{theorem}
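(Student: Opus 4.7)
The plan is to apply the implicit function theorem on Banach spaces, centred at the symmetric reference metric $\omega$. Since $\omega$ is locally symmetric with parallel curvature of model type, a direct Chern--Weil computation gives $c_k(\omega) = a_k\,\omega^k$ for each $k$ and a constant $a_k$; in particular $c_n(\omega) = \lambda_0\,\omega^n$, so $\omega$ is already hcscK. Fix $\alpha \in (0,1)$, write $\omega_{t,\phi} := \omega + t\tilde\omega + \spbp\phi$ (which is K\"ahler for $(\phi,t)$ near the origin), and let $C^{4,\alpha}_0(X)$ denote the $C^{4,\alpha}$ functions of zero $\omega$-average. Because $[\omega_{t,\phi}] = [\omega + t\tilde\omega]$, the constant $\lambda(t) := \chi(X)\big/\int_X (\omega + t\tilde\omega)^n$ is smooth in $t$ alone, and
\begin{equation*}
F(\phi,t) \;:=\; \frac{c_n(\omega_{t,\phi}) \;-\; \lambda(t)\,\omega_{t,\phi}^{\,n}}{\omega^n}
\end{equation*}
defines a smooth map $C^{4,\alpha}_0(X) \times (-\delta,\delta) \to C^{0,\alpha}_0(X)$ with $F(0,0) = 0$, whose zeros are exactly the hcscK representatives of the form stated in the theorem.

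The main step is to analyse the linearisation $L := D_\phi F(0,0)$. Under $\omega \mapsto \omega + s\spbp\psi$ the Chern curvature $\Theta$ of $T^{1,0}X$ varies with $\dot\Theta$ proportional to $\bar\partial\partial\hat\psi$, where $\hat\psi := g^{-1}(\spbp\psi)$ is the associated endomorphism, and $\dot c_n$ is a trace contraction of $\Theta^{n-1}\wedge\dot\Theta$. Because the background $\Theta$ is parallel and of model type this collapses to a fourth-order expression dominated by $\Delta_\omega^2 \psi$, so $L$ is a formally self-adjoint elliptic operator with principal symbol a positive multiple of $|\xi|^4$. An integration-by-parts computation, of the same flavour as the one underlying the Bando--Futaki obstruction mentioned in the introduction, identifies $\ker L$ on $C^{4,\alpha}_0(X)$ with the real parts of holomorphy potentials of holomorphic vector fields on $X$ --- exactly as the Lichnerowicz operator does in the classical cscK story.

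The remaining obstacle is therefore to verify $H^0(X, T^{1,0}X) = 0$. This holds in both cases of Theorem \ref{mainperturb}: a product $\mathbb{D}^1/\Gamma_1 \times \mathbb{D}^1/\Gamma_2$ of compact hyperbolic Riemann surfaces of genus at least two carries no non-zero holomorphic vector field, and neither does a smooth compact ball quotient $\mathbb{D}^2/\Gamma$. Hence $\ker L = 0$, and self-adjointness upgrades this to the isomorphism $L : C^{4,\alpha}_0(X) \to C^{0,\alpha}_0(X)$. The implicit function theorem then produces $\epsilon_1, \epsilon_2 > 0$ and a unique $C^1$ curve $t \mapsto \phi(t) \in C^{4,\alpha}_0(X)$ with $\|\phi(t)\|_{C^{4,\alpha}} < \epsilon_2$ and $F(\phi(t),t) = 0$ for $|t| < \epsilon_1$; smoothness of the dependence on $t$ is recovered by elliptic bootstrapping applied to this equation, finishing the argument.
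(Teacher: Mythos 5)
Your overall strategy (implicit function theorem at the symmetric background, linearise, show the linearisation is an isomorphism between zero-average H\"older spaces) is the same as the paper's, but two steps in your linear analysis have genuine gaps, both caused by treating the hcscK linearisation as if it behaved like the Lichnerowicz operator from the cscK theory. First, ellipticity: the linearisation of $\phi\mapsto c_2(\omega+\spbp\phi)$ does \emph{not} ``collapse to a fourth-order expression dominated by $\Delta_\omega^2\psi$'' with principal symbol a positive multiple of $\vert\xi\vert^4$. Computing via Bott--Chern transgression (Donaldson's Proposition 6, as the paper does), the fourth-order part is $-\frac{2}{(2\pi)^2}\,\bar{\partial}\partial\, P\bigl(\omega^{i\bar{k}}\sqrt{-1}\,\partial_j\partial_{\bar{k}}\psi,\,\Theta\bigr)$, whose principal symbol is $\frac{2}{(2\pi)^2}\Theta(\xi\wedge\bar{\xi},\xi\wedge\bar{\xi})$ --- the holomorphic sectional curvature in the direction $\xi$. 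The curvature enters the \emph{symbol} itself, so ellipticity is not automatic: it holds for the two model backgrounds precisely because their holomorphic sectional curvature has a definite sign, and it can fail on a general K\"ahler surface. Moreover in the product case the symbol is comparable to, but not a multiple of, $\vert\xi\vert^4$ (it is of the form $-c_1\vert\xi_1\vert^4-c_2\vert\xi_2\vert^4$), so even the statement of your claim needs repair; ``parallel curvature of model type'' is not the mechanism, the curvature sign is.

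Second, and more seriously, your kernel step is unsupported. You assert that integration by parts ``identifies $\ker L$ with the real parts of holomorphy potentials \ldots exactly as the Lichnerowicz operator does.'' No such identification is available: the cscK story rests on the factorisation of the linearisation as $\mathcal{D}^{*}\mathcal{D}$ with $\mathcal{D}\psi=\bar{\partial}\nabla^{1,0}\psi$, and the hcscK linearisation admits no analogous square structure. What integration by parts actually yields is the curvature-weighted identity $2\int_X \pbp\psi\wedge P(\psi_{i\bar{j}},\Theta)+\lambda\int_X\sqrt{-1}\,\partial\psi\wedge\bar{\partial}\psi\wedge\omega=0$, which on a general surface is indefinite and controls nothing. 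The paper's kernel argument uses the specific geometry: diagonalising $\psi_{i\bar{j}}=\mathrm{diag}(\mu_1,\mu_2)$ in normal coordinates, the first term becomes $-\Theta\bigl(\sum_i\mu_i\,\partial_{z_i}\wedge\partial_{\bar{z}_i},\,\sum_i\mu_i\,\partial_{z_i}\wedge\partial_{\bar{z}_i}\bigr)$ against the volume form, which is nonnegative because the curvature operator of these quotients is nonpositive, while $\lambda>0$ makes the second term nonnegative; hence $\nabla\psi=0$ and, by the zero-average normalisation, $\psi=0$. Your (correct) observation that these $X$ carry no nonzero holomorphic vector fields is irrelevant until the kernel identification is proved, and proving it is exactly the missing --- and, absent curvature hypotheses, false-in-general --- step. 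If you replace the Lichnerowicz analogy by this curvature-sign computation, the remainder of your argument (symmetry of $DL$, Fredholm alternative with Schauder estimates to get an isomorphism, implicit function theorem, bootstrapping for smoothness) goes through as in the paper.
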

\begin{remark}
Consider surfaces of general type satisfying $c_1^2=3c_2$. Noether's formula, Hodge theory, and the fact that $c_2$ is the Euler characteristic allows us to prove that $h^{1,1}=h^{2,0}+h^{1,0}+1>1$ for any such surface (of which there are infinitely many \cite{Borel}) other than the $100$ fake projective planes \cite{CS1,prasad}. For instance, the Cartwright-Steger surface \cite{CS} is a concrete example. For such surfaces one can come up with non-trivial examples of $\tilde{\omega}$ and hence by theorem \ref{mainperturb} find non-trivial hcscK metrics that are not K\"ahler-Einstein.
\label{ex} 
\end{remark}
\indent As pointed out earlier, whenever holomorphic vector fields exist the Bando-Futaki invariant provides an obstruction for the existence of hcscK metrics. It has been computed explicitly in very few cases, most notably by Liu \cite{Liu} for hypersurfaces in $\mathbb{CP}^n$. Her formula can be used to come up with examples of non-existence of hcscK metrics. The theorem we are alluding to is theorem $(1.1)$ of \cite{Liu} :
\begin{theorem}[Liu]
Let $M$ be a hypersurface in $\mathbb{CP}^n$ defined by a homogeneous polynomial $F$ of degree $d \leq n$. Let $Y$ be a holomorphic vector field on $\mathbb{CP}^n$ such that $YF=\kappa F$ for a constant $\kappa$. Then the $q$-th Bando-Futaki Invariant is 
\begin{gather}
\cF_q (Y, \omega _{FS}) = -(n+1-d)^{n-q} \frac{(d-1)(n+1)}{n} \sum _{j=0}^{q-1} (-d)^j (j+1) \binom {n} {q-j-1} \kappa \nonumber 
\end{gather}
\label{Liuthm}
\end{theorem}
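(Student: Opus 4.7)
The plan is to rewrite $\cF_q$ as an equivariant cohomology integral, push integration over $M$ up to $\mathbb{CP}^n$ via adjunction and the equivariant Poincar\'e dual of $M$, and then evaluate the resulting ambient integral by Atiyah--Bott--Berline--Vergne localisation or an equivalent generating-function computation.

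First, I would recast the invariant equivariantly. For a Hamiltonian holomorphic vector field $Y$ on a K\"ahler manifold $(X,\omega)$ with holomorphy potential $h_Y$, one forms the equivariant curvature $\Omega^Y=\Omega+\mu_Y^{TX}$ and the equivariant Chern forms $c_q^Y=\sigma_q(\Omega^Y)$. Bando's definition of $\cF_q$ is equivalent to extracting (up to a universal constant) the top-degree component of $c_q^Y\wedge e^{\omega+h_Y}$ and integrating; this Cartan-model rewriting makes the invariant manifestly equivariantly closed and amenable to localisation.

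Next, I would use the equivariant adjunction sequence $0\to TM\to T\mathbb{CP}^n|_M\to \mathcal{O}(d)|_M\to 0$ to express
\begin{gather*}
c^Y(TM) = \frac{c^Y(T\mathbb{CP}^n)|_M}{c^Y(\mathcal{O}(d))|_M}.
\end{gather*}
The equivariant Euler sequence $0\to \mathcal{O}\to \mathcal{O}(1)^{\oplus(n+1)}\to T\mathbb{CP}^n\to 0$ then presents $c^Y(T\mathbb{CP}^n)$ as a universal polynomial in $\omega_{FS}$ and the diagonal weights of $Y$. The hypothesis $YF=\kappa F$ says precisely that $F$ is an equivariant section of $\mathcal{O}(d)$ with weight $\kappa$, so its equivariant zero locus $M$ has equivariant Poincar\'e dual $d\omega_{FS}+\kappa\cdot 1$ in the Cartan model. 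Hence for any equivariantly closed form $\alpha$ on $\mathbb{CP}^n$,
\begin{gather*}
\int_M \alpha|_M = \int_{\mathbb{CP}^n}\alpha \wedge (d\omega_{FS}+\kappa),
\end{gather*}
which reduces the entire computation to $\mathbb{CP}^n$.

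The final step is algebraic. Specialising to a diagonal linear vector field $Y=\sum_i a_i z_i\partial_i$, one can evaluate the ambient equivariant integral by ABBV localisation at the $n+1$ coordinate fixed points; alternatively, the integral becomes the coefficient of $\omega_{FS}^n$ in a rational function of $\omega_{FS}$ whose coefficients are linear in $\kappa$. The main obstacle will be the combinatorial simplification of this coefficient into Liu's closed form: the $(n-q)$-fold volume factor contributes $(n+1-d)^{n-q}$ via $c_1(TM)=(n+1-d)\omega_{FS}|_M$, while the sum $\sum_{j=0}^{q-1}(-d)^j(j+1)\binom{n}{q-j-1}$ is recognisable as the degree-$(q-1)$ coefficient of the generating function $(1+z)^n/(1-dz)^2$ (using $(1-dz)^{-2}=\sum_j(j+1)(dz)^j$), and the normalising prefactor $(d-1)(n+1)/n$ arises from the Fubini--Study volume of $\mathbb{CP}^n$ together with the $\kappa$-coefficient of the equivariant Poincar\'e dual. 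Matching all of these identifications is routine but bookkeeping-heavy, and constitutes the bulk of the work.
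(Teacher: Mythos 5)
Your route (recasting $\cF_q$ equivariantly, pushing the integral up to $\mathbb{CP}^n$ via an equivariant Poincar\'e dual, and localising) is genuinely different from the paper's, which follows Liu: an explicit computation of the induced curvature, the rank-one determinant lemma (lemma \ref{Algebra}) giving the generating function $\det(I+t\Theta)=(1+t\omega)^{n+1}/\bigl(1+t(d\omega+\partial\bar{\partial}\xi)\bigr)$, and then Hodge-theoretic bookkeeping ($i_Y c=\bar{\partial}\alpha_1$, $i_Y Hc=\bar{\partial}\alpha_2$, Lu's identity $\mathrm{div}(Y)-Y(\xi)-(n+1-d)\theta=-\kappa$) to extract the harmonic part and integrate. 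But your proposal has a genuine gap at its first and most important step: the assertion that Bando's invariant is, ``up to a universal constant'', the top-degree part of $c_q^Y\wedge e^{\omega+h_Y}$ integrated over $M$. As stated this is false already for $q=1$: by definition $\cF_q$ involves $L_Y g_q$ with $\frac{\sqrt{-1}}{2\pi}\partial\bar{\partial}g_q=c_q-H(c_q)$, so the harmonic projection must be subtracted, and what it contributes is an \emph{additive}, class-dependent correction (a multiple of pairings of $(\omega+h_Y)$-powers), not a multiplicative universal constant. Repairing this needs two inputs you nowhere supply: (i) that on the hypersurface $H(c_q)$ is \emph{pointwise} the constant multiple $\alpha_{qq}\,\omega_{FS}^q$ (this uses that $\omega^q$ is parallel, hence harmonic, together with $[c_q]=\alpha_{qq}[\omega^q]$ from adjunction --- the paper's ``set $\xi=0$'' step); and (ii) a proof that $\cF_q$, whose definition goes through the Hodge decomposition of a specific metric and is not manifestly cohomological (its independence of the metric in the class is Bando's theorem, not a formality), equals the equivariant pairing $\int_M\bigl(c_q^Y-\alpha_{qq}(\omega+h_Y)^q\bigr)\wedge(\omega+h_Y)^{n-q}$. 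Establishing (ii) is essentially the entire content of Liu's and the paper's computation --- the $\alpha_1$, $\alpha_2$, $i_Y(\partial f)$ manipulations culminating in the harmonic part $t^2\kappa(1+t\omega)^{n+1}/(1+td\omega)^2$ --- so your plan black-boxes exactly the hard part.

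There is also a concrete normalisation error in the reduction step: in the Cartan model the equivariant Poincar\'e dual of $M$ is the equivariant Euler class $d(\omega_{FS}+\theta)+\kappa$, where $\theta$ is the (non-constant) moment function of $Y$ and the additive constant is fixed by $YF=\kappa F$ (this is Lu's identity in disguise); it is not $d\omega_{FS}+\kappa$. Dropping the term $d\theta$ changes the ambient integral, since $\theta$ pairs nontrivially with the rest of the integrand (its fixed-point values are the weights of $Y$). Smaller, repairable issues: localisation requires $Y$ diagonalisable, so you must argue by linearity and density that this suffices for the general $Y$ with $YF=\kappa F$; and your generating function should be $(1+z)^n/(1+dz)^2$, since $\sum_j(j+1)(-dz)^j=(1+dz)^{-2}$ produces the signs $(-d)^j$ in theorem \ref{Liuthm}, whereas your $(1-dz)^{-2}$ gives $d^j$. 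The missing bridge in (ii) between $\cF_q$ and equivariant cohomology is the real gap; until it is proved (or located in the literature), the proposal does not constitute a proof.
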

In this paper we give a simplified proof of Liu's formula (whilst adhering to her basic strategy). The technique of computation (relying on generating series) might potentially be useful in calculating Bando-Futaki invariants in other cases. The crucial simplification comes from a linear algebra lemma (lemma \ref{Algebra}) that was used to similar effect in \cite{PiTa}. \\
\indent It is interesting to see if Lebrun-Simanca kind of deformation results can be proven for these objects. We hope to explore this and other questions in later works.\\
\emph{Acknowledgments} : The author is sincerely grateful to M.S. Narasimhan and Harish Seshadri for fruitful discussions. Thanks is also in order to two users of mathoverflow.net \cite{mathov} for suggesting the examples in remark \ref{ex}. The author was supported by an SERB grant No. ECR/2016/001356 and also thanks IISc for the Infosys young investigator award. 
\section{A higher extremal metric on a ruled surface}
\indent First we give a high level overview of this section. The aim is to produce a higher extremal metric on a manifold with a lot of symmetry. Akin to \cite{TF} an ansatz reduces the problem to finding a parameter $C$ and solving an ODE depending on $C$ for a function $\phi$ on $[1,m+1]$ where $m$ is a given integer (that specifies the K\"ahler class under consideration) satisfying $\phi(m+1)=0$. The ODE being non-integrable poses difficulties with regard to existence. It turns out that for a connected set of $C$ the ODE does have a smooth solution depending smoothly on $C$ but it is not clear whether $\phi(m+1)=0$. So we produce a value of $C$ so that $\phi(m+1)>0$ and another value so that $\phi(m+1)<0$. Thus there is some admissible $C$ for which $\phi(m+1)=0$. In our proof we can do everything with the exception of producing a $C$ so that $\phi(m+1)<0$. We can do this rigorously only for $m=1$. However, numerically solving the ODE using the Runge-Kutta method on Wolfram Alpha seems to suggest that this is true for higher values of $m$ too. It is just that one does not know explicit error bounds on the numerical solution and hence cannot ``trust" it for a proof. With this bird's eye view in mind we proceed further.\\
\indent Let $(\Sigma, \omega_{\Sigma})$ be a genus $2$ Riemann surface equipped with a metric of constant scalar curvature $-2$. Let $L$ be a degree $-1$ holomorphic line bundle on $\Sigma$ equipped with a metric $h$ such that $-\omega_{\Sigma}$ is the curvature of $h$. Let $X$ be the ruled surface $\mathbb{P}(L\oplus \mathcal{O})$. Just as in \cite{TF, ACGT, HS} we will construct extremal K\"ahler metrics on $X$. In whatever follows we follow the exposition of Sz\'ekelyhidi \cite{Sz}. \\
\indent The strategy is to first consider an ansatz on the total space of $L$ minus the zero section and then extend the resulting metric to all of $X$. One way to potentially produce a metric is to pullback $L$ to its total space and add the curvature of the resulting bundle to the pullback of $\omega_{\Sigma}$. Motivated by this observation one writes the following ansatz. (Let $p: X \rightarrow \Sigma$ be the projection map, $z$ be a coordinate on $\Sigma$, and $w$ be a coordinate on the fibres $L$.)
\begin{gather}
\omega = p^{*} \omega_{\Sigma} +\spbp f(s),
\label{ansatz}
\end{gather}
where $s=\ln \vert(z,w)\vert_h ^2 = \ln \vert w \vert^2 + \ln h(z) $ and $f$ is a strictly convex function that makes $\omega$ a metric. We choose coordinates $(z_0,w_0)$ around a point $Q$ such that $dh (z_0) = 0$. Therefore at $Q$ we have the following equalities.
\begin{gather}
\del s (Q)= \frac{dw}{w} \ , \ \delb s (Q)= \frac{d\bar{w}}{\bar{w}} \nonumber \\ 
\spbp s (Q) = p^{*} \omega _{\Sigma} \nonumber \\
\omega (Q) = (1+f^{'}(s))p^{*} \omega _{\Sigma} + f^{''}(s) \sqrt{-1}\frac{dw \wedge d\bar{w}} {\vert w \vert^2}. 
\label{usefulatQ}
\end{gather} 
The last equation is easily seen to hold at points other than $Q$ as well.\\
 \indent Proceeding to study the K\"ahler class of $\omega$ we see that by the Leray-Hirsch theorem $H^2(X, \mathbb{R}) = \mathbb{R} C \oplus \mathbb{R} S_{\infty}$, where $C$ is the Poincar\'e dual of a fibre (i.e. $C$ is a sphere) and $S_{\infty}$ is a copy of $\Sigma$ sitting in $X$ as the ``infinity section", i.e. the line $L\oplus \{ 0 \}$. It is clear that $C.C =0, C.S_{\infty} = 1 = S_{\infty}. S_{\infty}$. We wish our ansatz to be in the cohomology class $[\omega] = 2\pi (C +mS_{\infty})$ where $m$ is a positive integer. Therefore $[\omega].C = 2\pi m$ and $[\omega].S_{\infty} = 2\pi(1+m)$. Indeed,
\begin{gather}
\displaystyle \int _C \omega = \int _{\mathbb{C} - \{ 0 \}} f^{''}(s)  \sqrt{-1}\frac{dw \wedge d\bar{w}} {\vert w \vert^2}  = 2\pi (\lim_{s\rightarrow \infty} f^{'}(s) - \lim_{s\rightarrow -\infty} f^{'}(s)) = 2\pi m \nonumber \\
 \displaystyle \int _{S_{\infty}} \omega = \int _{\Sigma} \lim_{s \rightarrow \infty} (1+f^{'}(s)) \omega_{\Sigma} = (1+m) \int _{\Sigma} \omega_{\Sigma} = 2\pi(1+m). 
\label{cohomologyeqns} 
\end{gather}
Thus $0\leq f^{'}(s) \leq m$.\\
\indent Returning back to the metric $\omega$ we see that 
\begin{gather}
\omega^2 = 2(1+f^{'}(s))f^{''}(s)  p^{*} \omega _{\Sigma}  \sqrt{-1}\frac{dw \wedge d\bar{w}} {\vert w \vert^2}. 
\label{volumeform}
\end{gather} 
Calculating the curvature matrix of forms $\Theta = \delb (h^{-1} \del h)$ we obtain the following.
\begin{gather}
\Theta = \left [ \begin{array}{cc} -\pbp \ln(1+f^{'}(s)) -2p^{*}\omega_{\Sigma}  & 0 \\ 0 &  -\pbp \ln (f^{''}(s))\end{array} \right ]
\end{gather}
At this point we appeal to the unreasonable effectiveness of the Legendre transform and define 
\begin{gather}
\tau = f^{'}(s) \ , \ f(s) + F(\tau) = s\tau 
\label{deftau}
\end{gather}
Therefore, $s=F^{'}(\tau), \frac{ds}{d\tau} = F^{''}(\tau)$. Since $f^{''}(s)$ seems to crop up often, define (as Hwang-Singer did in \cite{HS}) the so-called momentum profile $\phi(\tau) = f^{''}(s) = \frac{1}{F^{''}(\tau)}$.  Hence $\frac{d\tau}{ds} = \frac{1}{F^{''}(\tau)} = \phi (\tau)$. Moreover, $f^{'''}(s) = \frac{df^{''}(s)}{d\tau} \phi (\tau) = \phi ^{'} \phi$. \\
\indent In terms of $\gamma = \tau+1 \in [1,m+1]$ the curvature form reads as 
\begin{gather}
\sqrt{-1}\Theta =  \left [ \begin{array}{cc} -\spbp \ln(\gamma) -2p^{*}\omega_{\Sigma}  & 0 \\ 0 &  -\spbp \ln (\phi)\end{array}  
 \right ]\nonumber \\
= \left [ \begin{array}{cc} \sqrt{-1}\frac{\partial \gamma \delb \gamma}{\gamma^2} -\frac{1}{\gamma} \spbp \gamma -2p^{*}\omega_{\Sigma}  & 0 \\ 0 & -\left ( \frac{\phi^{'}}{\phi} \right )^{'}\sqrt{-1}\partial \gamma \delb \gamma -\frac{\phi^{'}}{\phi} \spbp \gamma \end{array}  \right ]\nonumber \\
  =  \left [ \begin{array}{cc} \frac{\phi}{\gamma} \left [ \frac{\phi}{\gamma}-\phi^{'}\right ] \frac{dw d\bar{w}}{\vert w \vert^2}-(\frac{\phi}{\gamma}+2) p^{*} \omega_{\Sigma}  & 0 \\ 0 & -\phi^{''}\phi \sqrt{-1} \frac{dw d\bar{w}}{\vert w\vert^2} - \phi^{'} p^{*} \omega _{\Sigma} \end{array}\right ].\nonumber 
\end{gather}
The top Chern form is $c_2 = \frac{1}{(2\pi)^2} \det(\sqrt{-1} \Theta)$ which is
\begin{gather}
c_2 = \frac{1}{(2\pi)^2} p^{*} \omega_{\Sigma} \frac{\sqrt{-1} dw d\bar{w}}{\vert w \vert^2}\frac{\phi}{\gamma^2} \left ( \gamma (\phi + 2\gamma) \phi^{''} + \phi^{'} (\phi^{'}\gamma-\phi) \right ).
\label{topchern}
\end{gather}
We want 
\begin{gather}
c_2 = \frac{1}{(2\pi)^2}\frac{\lambda}{2} \omega ^2
\label{mainagain}
\end{gather}
 to hold for some $\lambda$ whose gradient is a holomorphic vector field, i.e.,
\begin{gather}
\nabla ^{(1,0)} \lambda = \lambda ^{'} \nabla ^{(1,0)} \tau = \lambda ^{'} w\frac{\partial }{\partial w} \nonumber 
\end{gather}
which is a holomorphic vector field if and only if $\lambda ^{'} $ is a constant, i.e., $\lambda = A \gamma +B$ for some $A$ and $B$. \\
\indent So our equation \ref{mainagain} boils down to an ODE for $\phi (\gamma)$.
\begin{gather}
\gamma (\phi + 2\gamma) \phi^{''} + \phi^{'} (\phi^{'}\gamma-\phi) = (A\gamma+B)\gamma^3 \nonumber \\
\Rightarrow 2\gamma ^2 \phi ^{''} + (\frac{\phi \phi^{'}}{\gamma})^{'} \gamma^2 = (A\gamma+B)\gamma ^3 \nonumber \\
\Rightarrow 2\phi^{'} + \frac{\phi \phi^{'}}{\gamma} = A\frac{\gamma^3}{3} + B\frac{\gamma^2}{2} + C \nonumber \\
\Rightarrow (2\gamma+\phi)\phi^{'} =  A\frac{\gamma^4}{3} + B\frac{\gamma^3}{2} + C\gamma,
\label{odesimple}
\end{gather} 
where $A, B, C$ are constants. It can be easily seen that \cite{Sz} for $\omega$ to extend across the zero and infinity sections the following boundary conditions have to be met by $\phi(\gamma)$.
\begin{gather}
\phi(1) = \phi(m+1) = 0 \nonumber \\
\phi^{'}(1) = -\phi^{'}(m+1) = 1 
\label{boundc}
\end{gather}
So we need to solve \ref{odesimple} for $\phi$ as well as for $A,B,C$ so that the boundary conditions \ref{boundc} are met and $\phi >0 \ \forall \ \gamma \in[1,m+1]$. Unfortunately the form $(2x+y)dy-p(x)dx$ is not closed and hence equation \ref{odesimple} cannot be integrated. Nevertheless, one can still prove theorem \ref{mainrule} for $m=1$. In order to do so we prove the following preliminary result about the ODE \ref{odesimple} with boundary conditions \ref{boundc}.\\
\begin{theorem}
Given a positive integer $m$, consider the following ODE.
\begin{gather}
(2\gamma+\phi)\phi^{'} =  A\frac{\gamma^4}{3} + B\frac{\gamma^3}{2} + C\gamma
\label{folODE}
\end{gather}
with the boundary conditions
\begin{gather}
\phi(1) = \phi(m+1) = 0 \nonumber \\
\phi^{'}(1) = -\phi^{'}(m+1) = 1.
\label{boundcthm}
\end{gather}
If $C<M$ (where $M>2$) then there exist linear functions $A(C), B(C)$ depending on a parameter $C$ and a smooth solution $\phi$ to \ref{folODE} on $[1,m+1]$ depending smoothly on $C$ satisfying all the conditions of \ref{boundcthm} except $\phi(m+1)=0$. There exists a $C<M$ such that $\phi(m+1,C) >0$. Moreover, if there exists a smooth solution satisfying all the boundary conditions, then $\phi>0$ on $[1,m+1]$.
\label{partialode}
\end{theorem}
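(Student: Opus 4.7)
Plugging the boundary data of \eqref{boundcthm} into \eqref{folODE} at $\gamma=1$ and $\gamma=m+1$ forces the linear relations $P(1)=2$ and $P(m+1)=-2(m+1)$ on the polynomial $P(\gamma):=\tfrac{A}{3}\gamma^4+\tfrac{B}{2}\gamma^3+C\gamma$. The associated $2\times 2$ coefficient matrix has non-zero determinant $-m(m+1)^2/6$, so these two equations determine $A(C)$ and $B(C)$ uniquely as affine functions of $C$. Once these are fixed, the IVP $\phi(1)=0$ automatically gives $\phi'(1)=1$ from \eqref{folODE}, and the identity $\phi'(m+1)=-2(m+1)/(2(m+1)+\phi(m+1))$ shows $\phi'(m+1)=-1\Longleftrightarrow\phi(m+1)=0$, so only $\phi(m+1)=0$ remains to be enforced.

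\textbf{Existence on $[1,m+1]$ and smooth dependence on $C$.} Picard--Lindel\"of supplies a unique smooth local solution at $\gamma=1$, which extends as long as $V(\gamma):=2\gamma+\phi(\gamma)>0$. Multiplying \eqref{folODE} by $V$ and integrating, using $V(1)=2$, yields the two equivalent identities
\begin{equation}
V(\gamma)^2 \;=\; 4 + 4\!\int_1^{\gamma}\!V + 2Q(\gamma), \qquad \tfrac{1}{2}\phi(\gamma)^2+2\gamma\phi(\gamma)\;=\;Q(\gamma)+2\!\int_1^{\gamma}\!\phi, \nonumber
\end{equation}
where $Q(\gamma):=\int_1^{\gamma}P$. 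My plan is to bootstrap from the first: if $Q\geq 0$ on $[1,m+1]$, then $V^2\geq 4$ throughout, so $V\geq 2$ and the solution extends smoothly; smooth dependence on $C$ is standard. Using the explicit formulas for $A(C),B(C)$, a direct check shows $Q\geq 0$ on $[1,m+1]$ for $C$ in an open interval about $C=2$, so defining $M$ as the supremum of admissible $C$ yields $M>2$.

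\textbf{Producing $C$ with $\phi(m+1,C)>0$.} I would take $C=2$: the resulting $A(2)=-12/(m(m+1)^2)$, $B(2)=8/(m(m+1)^2)$ give $P(\gamma)=2\gamma-\tfrac{4\gamma^3(\gamma-1)}{m(m+1)^2}$. Since $P''(\gamma)=-24\gamma(2\gamma-1)/(m(m+1)^2)<0$ on $[1,m+1]$, $P$ is strictly concave with $P(1)=2>0>-2(m+1)=P(m+1)$, and hence has a unique interior zero $\gamma^{*}$, with $P>0$ on $(1,\gamma^{*})$ and $P<0$ on $(\gamma^{*},m+1)$. Consequently $Q(1)=0$, $Q$ rises on $[1,\gamma^{*}]$ and falls on $[\gamma^{*},m+1]$, and a direct polynomial computation gives
\begin{equation}
Q(m+1)\;=\;\frac{m^3+3m^2-2m+1}{5m}-\frac{1}{5m(m+1)^2}\;>\;0 \nonumber
\end{equation}
for every integer $m\geq 1$, so $Q>0$ on $(1,m+1]$. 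If $\phi$ had a first zero $\gamma_0\in(1,m+1]$, then $\phi>0$ on $(1,\gamma_0)$ and the second identity at $\gamma_0$ would read $0=Q(\gamma_0)+2\int_1^{\gamma_0}\phi>0$, which is absurd. Hence $\phi>0$ on $(1,m+1]$, and in particular $\phi(m+1,2)>0$. This third step is the main obstacle: since \eqref{folODE} is non-integrable in closed form, one cannot read off $\phi(m+1,C)$ explicitly, and the crucial positivity must be inferred from the quadratic integral identity combined with a uniform-in-$m$ polynomial inequality.

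\textbf{Positivity of any full solution.} Finally, suppose some admissible $C$ yields a $\phi$ satisfying all of \eqref{boundcthm}, and assume for contradiction that $\gamma_0\in(1,m+1)$ is the smallest interior zero of $\phi$. Then $\phi>0$ on $(1,\gamma_0)$, $\phi'(\gamma_0)\leq 0$, and \eqref{folODE} gives $P(\gamma_0)\leq 0$. Factoring $P(\gamma)=\gamma\widetilde P(\gamma)$ with $\widetilde P$ cubic, $\widetilde P'(\gamma)=\gamma(A\gamma+B)$ has at most one zero in $(1,m+1)$, so $\widetilde P$ has at most one critical point there; combined with $\widetilde P(1)=2$ and $\widetilde P(m+1)=-2$, a short sign analysis on this critical value shows $P$ has exactly one sign change in $(1,m+1)$, at some $\gamma^{*}$, positive on the left and negative on the right. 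Hence $\gamma_0\geq\gamma^{*}$. If $\phi'(\gamma_0)<0$, then $\phi<0$ just past $\gamma_0$, while $\phi(m+1)=0$ and $\phi'(m+1)=-1$ force $\phi>0$ just below $m+1$, so by the IVT $\phi$ has a further zero $\gamma_1\in(\gamma_0,m+1)$ with $\phi'(\gamma_1)\geq 0$, forcing $P(\gamma_1)\geq 0$ and so $\gamma_1\leq\gamma^{*}\leq\gamma_0<\gamma_1$, absurd. If instead $\phi'(\gamma_0)=0$, then $P(\gamma_0)=0$ forces $\gamma_0=\gamma^{*}$, and differentiating \eqref{folODE} at $\gamma_0$ gives $\phi''(\gamma_0)=P'(\gamma^{*})/(2\gamma^{*})<0$, so $\phi$ has a local maximum of value $0$ at $\gamma_0$, again contradicting $\phi>0$ on $(1,\gamma_0)$. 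Therefore $\phi>0$ on $(1,m+1)$.
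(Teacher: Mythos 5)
Your proposal runs on the same engine as the paper's proof --- the linear relations \ref{relationabc} determining $A(C),B(C)$, the quadratic substitution (your $V=2\gamma+\phi$ is the paper's $v=V^2/2$, which turns the problem into \ref{vsub}), and the sign structure of $p(\gamma)=\tfrac{A}{3}\gamma^3+\tfrac{B}{2}\gamma^2+C$ from lemma \ref{pgamma} --- but it reaches the clause ``there exists $C<M$ with $\phi(m+1,C)>0$'' by a genuinely different route. The paper sends $C\to-\infty$: since $L<0$, the quantity $\int_1^{m+1}p(t)t\,dt=LC+N$ becomes arbitrarily large and $v(m+1)\geq 2+LC+N>2(m+1)^2$ follows in one line. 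You instead fix the explicit value $C=2$ and combine the integrated identity $\tfrac12\phi^2+2\gamma\phi=Q+2\int_1^{\gamma}\phi$ with the positivity $Q>0$ on $(1,m+1]$ (your value of $Q(m+1)$ is exactly the paper's $2L+N$, shown in \ref{Cleqtwo} to exceed $2/5$, and your closed form agrees with it) to rule out a first zero of $\phi$; this is correct, and arguably cleaner in that it exhibits a concrete admissible $C$, at the price of using the concavity of $P$ special to $C=2$. Your final step (positivity of any full solution via a first-zero/IVT contradiction) is an expanded version of the paper's one-line observation inside the proof of lemma \ref{pgamma}; one small unproved assertion there is $P'(\gamma^{*})<0$, which in the degenerate subcase requires noting that $\widetilde P$ cannot vanish at its unique critical point --- if it did, monotonicity on either side of that point would contradict either $\widetilde P(1)=2$ or $\widetilde P(m+1)=-2$ --- true, but worth a sentence.

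The one genuine gap is in the clause ``for all $C<M$.'' As written, you certify $Q\geq 0$ only for $C$ in an open interval about $2$, and ``defining $M$ as the supremum of admissible $C$'' silently assumes the admissible set is a half-line $(-\infty,M]$; a supremum alone does not give that. The missing ingredients are exactly what lemma \ref{pgamma} supplies: since $Q'=P$ changes sign exactly once, from positive to negative, for \emph{every} $C$ (your own step-4 analysis), one has $\min_{[1,m+1]}Q=\min\bigl(0,Q(m+1)\bigr)=\min(0,LC+N)$; and since $L<0$ --- verified in the paper by the explicit computation \ref{LNpos}, which you never perform --- the map $C\mapsto LC+N$ increases as $C$ decreases, so $Q\geq 0$, hence global existence, holds for every $C\leq N/(-L)$, with $N/(-L)>2$. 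Without the sign of $L$ your admissible set could in principle be a bounded interval, and the statement as formulated (and the paper's own use of very negative $C$) would not follow. The fix is immediate with tools already in your write-up plus the computation $L<0$, $N>0$; note also that the paper extracts a slightly larger $M$ by demanding only $LC+N\geq -2+\epsilon$, exploiting the slack $v(1)=2$ in the lower bound, where your criterion insists on $Q\geq 0$.
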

\begin{proof}
We impose the boundary conditions \ref{boundc} on equation \ref{odesimple} to get the following relations between $A,B,C$.
\begin{gather}
2 = \frac{A}{3}+\frac{B}{2}+C \nonumber \\
-2 = \frac{A(m+1)^3}{3}+\frac{B(m+1)^2}{2}+C \nonumber \\
\Rightarrow A(C) = \frac{3C}{m} \left [ 1-\frac{1}{(m+1)^2} \right ] - \frac{6}{m} \left [ \frac{1}{(m+1)^2} +1 \right] \nonumber \\
B(C) = -2C \left [1+\frac{1}{m}-\frac{1}{m(m+1)^2} \right ]+4+\frac{4}{m}\left [1+\frac{1}{(m+1)^2} \right]
\label{relationabc} 
\end{gather}
Thus $A(C)$ and $B(C)$ are linear functions of $C$. Moreover, given $C$, if we manage to solve \ref{odesimple} on $[1,m+1]$ with the initial condition $\phi(1)=0$ then \ref{relationabc} imply that $\phi^{'} =1$ and if we further ensure that $\phi(m+1) = 0$ then $\phi^{'}(m+1) = -1$ automatically. The bottom line is that we have to prove that given $C$, a smooth positive solution depending smoothly on $C$ exists to the initial value problem 
\begin{gather}
 \phi^{'} = \frac{ A(C)\frac{\gamma^4}{3} + B(C)\frac{\gamma^3}{2} + C\gamma}{2\gamma+\phi} \ on \ [1,m+1]  \nonumber \\
\phi(1) =0
\label{initval}
\end{gather}
and that there exists a $C=C_m$ such that $\phi(m+1) = 0$.\\
\indent Near $\gamma=1$ since the right-hand side of \ref{initval} is locally Lipschitz we have a unique smooth solution locally. At this point it is convenient to change variables. Let $v=\frac{(2\gamma+\phi)^2}{2}$. Equation \ref{initval} turns into the following.
\begin{gather}
v^{'} = 2\sqrt{2} \sqrt{v} +p(\gamma) \gamma \nonumber \\
v(1) = 2
\label{vsub}
\end{gather}
We want to find a smooth solution of \ref{vsub} on $[1,m+1]$ so that $v(m+1)=2(m+1)^2$ and $v(\gamma) >2\gamma ^2$ on $(1,m+1)$. \\
\indent As before we have a unique smooth solution depending smoothly on parameters near $\gamma=1$. If there is a solution on $[1,\gamma_*)$ such that $M\geq v\geq\epsilon >0$ then since the right-hand side is $C^1$, by standard ODE theory the solution can be continued past $\gamma_{*}$. An easy comparison argument using $\sqrt{v} \leq kv$ and Gronwall's inequality shows that $v$ is always bounded above. In order to prove lower bounds on $v$ we need to study $p(\gamma)$. 
\begin{lemma}
Let $m\geq 1$ be a given positive integer and $C$ be a real number. The polynomial $p(\gamma) = A(C)\frac{\gamma^3}{3} + B(C)\frac{\gamma^2}{2} + C$  (and hence $p(\gamma) \gamma$) satisfying  $p(m+1) =-2$ and $p(1) =2$ has exactly one root in $[1,m+1]$. Moreover $p(\gamma)$ has at most one critical point $\gamma = -\frac{B}{A}$ in $[1,m+1]$. As a consequence on $[1,m+1]$ we have the following.
\begin{gather}
\displaystyle \int _1 ^{\gamma} p(t)tdt \geq \min (0, \int _1 ^{m+1} p(t)tdt) \ and \nonumber \\
\int _1 ^{m+1} p(t)tdt = LC + N, \ where \nonumber \\
L = \frac{m^2+2m}{2}-\frac{(m+1)^4-1}{4} \left [1+\frac{1}{m} - \frac{1}{m(m+1)^2}\right ] + \frac{(m+1)^5-1}{5m} \left [ 1- \frac{1}{(m+1)^2} \right ] \nonumber \\
N = -\frac{(m+1)^5-1}{5} \frac{2}{m} \left [1+\frac{1}{(m+1)^2} \right ] + \frac{1}{2} ((m+1)^4-1) \left [ 1+\frac{1}{m} + \frac{1}{m(m+1)^2} \right ].
\label{LC}
\end{gather}
If $C \leq 2$ then $LC+N>0$ which implies that $\int_ 1^{\gamma } p(t)tdt >0$.
\label{pgamma}
\end{lemma}
\begin{proof}
 Since $p(m+1) =-2$ and $p(1) =2$, $p$ has an odd number of roots (counted with multiplicity) in $[1,m+1]$. Now $p^{'} = \gamma (A(C) \gamma+B(C))$ which has at most one root in $[1,m+1]$. This implies that $p$ has exactly one root $\gamma_0$ in $[1,m+1]$. This also means that if there exists a smooth solution of \ref{initval} on $[1,m+1]$ satsifying $\phi(m+1)=0$ then $\phi>0$ on $(1,m+1)$.\\
\indent Notice that $\gamma \rightarrow \int _1 ^{\gamma}p(t)t dt$ assumes its minimum over $[1,m+1]$ on the boundary because its only critical point is a local maximum. An easy calculation shows that indeed $\displaystyle \int _1 ^{m+1} p(t)tdt = LC+N$ where $L$ and $N$ are as above. The following proves that indeed $L<0$ and $N>0$ for $m\geq 1$.
\begin{align}
L &= \frac{(m+1)^2-1}{2} - \frac{(m+1)^4-1}{4m} \left [m+1 - \frac{1}{(m+1)^2}\right ]+\frac{(m+1)^4}{5}  \left [ 1- \frac{1}{(m+1)^2} \right ]\nonumber \\
&+  \frac{(m+1)^4-1}{5m}\left [ 1- \frac{1}{(m+1)^2} \right ]   \nonumber \\
&= \frac{3}{10}(m+1)^2-\frac{1}{20}(m+1)^4-\frac{1}{4}-\frac{(m+1)^4-1}{20m} \left [1-\frac{1}{(m+1)^2} \right ] <0 \ \forall \ m\geq 1\nonumber \\
N &= -\frac{m(m+1)^4-1+(m+1)^4}{5} \frac{2}{m} \left [1+\frac{1}{(m+1)^2} \right ]+ \frac{(m+1)^4-1)}{2m} \left [ m+1 + \frac{1}{(m+1)^2} \right ] \nonumber \\
&= \frac{1}{10}(m+1)^4-\frac{1}{2}-\frac{2}{5}(m+1)^2+\frac{(m+1)^4-1}{10m} \left [1+\frac{1}{(m+1)^2} \right ]>0 \ \forall \ m\geq 1.
\label{LNpos}
\end{align}
Let $C=2-\delta$ where $\delta\geq 0$. Then 
\begin{align}
LC+N &=2L+N-\delta L >2L+N \nonumber \\
&= (m+1)^2-1+((m+1)^4-1)\frac{1}{m(m+1)^2}-\frac{4}{5}\frac{(m+1)^5-1}{m(m+1)^2} \nonumber \\
&= (m+1)^2-1 + ((m+1)^4-1)\frac{1}{m(m+1)^2}-\frac{4}{5}\frac{(m+1)^4-1}{m(m+1)^2}-\frac{4}{5}(m+1)^2 \nonumber \\
&=\frac{(m+1)^2}{5} -1 + \frac{1}{5}((m+1)^4-1)\frac{1}{m(m+1)^2}  \nonumber \\
&=\frac{(m+1)^2}{5}-\frac{4}{5}+\frac{m+1}{5}+\frac{1}{5(m+1)} +\frac{1}{5(m+1)^2} >\frac{2}{5}
\label{Cleqtwo} 
\end{align}
\end{proof}

We now conclude the proof of theorem \ref{partialode}. Given $m$, if $C$ is chosen so that 
\begin{gather}
\displaystyle \int _1 ^{m+1} p(\gamma) \gamma d\gamma \geq -2 +\epsilon, \nonumber \\
i.e., LC+N \geq -2+\epsilon
\label{integralcond}
\end{gather}
then 
\begin{gather}
v(\gamma) -v(1)= \displaystyle \int _1 ^{\gamma} 2\sqrt{2} \sqrt{v}+ \int_1 ^{\gamma} p(t) t dt \nonumber \\
\Rightarrow v(\gamma) > 2-2+\epsilon = \epsilon. 
\label{boundbelow}
\end{gather}
This implies that for $C$ satisfying \ref{integralcond} (in particular, by lemma \ref{pgamma} $C\leq 2$  satisfies \ref{integralcond} for all $m\geq 1$) we have a smooth solution to \ref{vsub}, hence to \ref{initval} on $[1,m+1]$. Now we have to somehow choose a $C$ so that $\phi(m+1)=0$, i.e., $v(m+1)=2(m+1)^2$. One possible strategy is to show that there is a $C$ satisfying \ref{integralcond} such that $v(m+1,C) <2(m+1)^2$ and likewise another $C$ for which $v(m+1,C)>2(m+1)^2$. Thus there will exist a $C$ so that $v(m+1,C)=2(m+1)^2$. \\
\indent If $C$ is very negative then $LC+N$ can be made as large as we want. Thus $v(m+1,C)>2+LC+N>2(m+1)^2$. This completes the proof of theorem \ref{partialode}
\end{proof}

\indent We proceed further to prove theorem \ref{mainrule}. As mentioned earlier, this reduces to choosing $C$ so that $LC+N\geq-2+\epsilon$ for some $\epsilon>0$ so that $v(m+1,C)<2(m+1)^2$. This is a tricky business. Here is where we use the assumption that $m=1$. For this we need to choose $\delta>0$ to be very small so that among other things  $C=2+\delta$ satisfies $\displaystyle LC+N =-\frac{33}{20}$, $A(C)>0$, and $B(C)<0$. Upon calculation we have the following.
\begin{align}
\frac{A}{3} &= \frac{\delta}{m} \left [1-\frac{1}{(m+1)^2} \right ] - \frac{4}{m(m+1)^2} \nonumber \\
\frac{B}{2} &= -\delta \left [1+\frac{1}{m}-\frac{1}{m(m+1)^2} \right ] + \frac{4}{m(m+1)^2}  \nonumber \\
\Rightarrow \ &\mathrm{if}  \ \delta > \frac{4}{(m+1)^2-1} \ then \ A>0 \ , \ B<0 \nonumber \\
LC+N &= \delta L + \frac{(m+1)^2}{5}-\frac{4}{5}+\frac{m+1}{5}+\frac{1}{5(m+1)} +\frac{1}{5(m+1)^2} \nonumber \\
&= \delta \Bigg ( \frac{3}{10}((m+1)^2-1)-\frac{1}{20}((m+1)^4-1)-\frac{(m+1)^4-1}{20m} \left [1-\frac{1}{(m+1)^2} \right ] \Bigg ) \nonumber\\
&+ \frac{(m+1)^2}{5}-\frac{4}{5}+\frac{m+1}{5}+\frac{1}{5(m+1)} +\frac{1}{5(m+1)^2} \nonumber \\
&= \delta^{'} L \ , where \  \delta = \delta ^{'} + \frac{4}{(m+1)^2-1} \nonumber \\
\Rightarrow \frac{A}{3} &= \frac{\delta^{'}}{m}  \left [1-\frac{1}{(m+1)^2} \right ] \nonumber \\
\frac{B}{2} &= -\delta^{'} \frac{(m+1)^3-1}{m(m+1)^2} -\frac{4}{(m+1)^2-1} .
\label{delt}
\end{align}

Therefore $\delta^{'} = \frac{-33}{20L}$.\\

\indent We now prove that for $m=1$ and the chosen value of $C=2+\frac{4}{(m+1)^2-1} -\frac{1.5}{L} = \frac{22}{3}$ the solution $v$ satisfies $v^{'} >0$ on $[1,2]$. Before this we note that $A=9$ and $B=\frac{50}{3}$. If $\gamma_0$ is the root of $p(\gamma)$ on $[1,2]$ then on $[1,\gamma_0]$ we see that 
\begin{gather}
v^{'} \geq 2\sqrt{2} \sqrt{v} \nonumber \\
\Rightarrow (\sqrt{v})^{'} \geq \sqrt{2} \Rightarrow \sqrt{v}(\gamma_0) \geq \sqrt{2}+\sqrt{2} (\gamma_0-1) = \sqrt{2} \gamma_0. \nonumber
\end{gather}
Therefore, $v^{'} >0$ on $[1,\gamma_0]$. On the other hand, the root $\gamma_0$ in $[1,2]$ of the polynomial $p(\gamma) \gamma = 3\gamma ^4 -\frac{25}{3} \gamma ^3+\frac{22}{3} \gamma$ is clearly larger than $1.2$. Therefore $\sqrt{v(\gamma_0)} > 1.2\sqrt{2} $. Moreover, one can also see (by graphing for instance) that $p(\gamma)\gamma>-4.5$ on $[1,2]$. But $v^{'}(\gamma_0) = 2\sqrt{2}\sqrt{v(\gamma_0)} =4.8$ and hence when $\gamma > \gamma_0$ we see that $v^{'}(\gamma) >-4.5 + 4.8 =0.3$. This proves that $v^{'}>0$ on $[1,2]$.\\

\indent As a consequence, for $a,a+h \in [1,2]$ we see that
\begin{align*}
&2\sqrt{2v(a)}h+\displaystyle \int _a^{a+h} p(\gamma) \gamma d\gamma < v(a+h)-v(a)< 2\sqrt{2v(a+h)}h+\displaystyle \int _a^{a+h} p(\gamma)\gamma d\gamma 
\end{align*}
\begin{align}
\Rightarrow \ &2\sqrt{2v(a)}+\displaystyle \int _a^{a+h} p(\gamma) \gamma d\gamma \leq v(a+h) \nonumber \\
&\leq 4h^2 + v(a) +\displaystyle \int _a^{a+h} p(\gamma)\gamma d\gamma+2h\sqrt{4h^2+2\left (v(a)+\displaystyle \int _a^{a+h} p(\gamma)\gamma d\gamma \right)}.
\label{keyineq}
\end{align}  
Using inequality \ref{keyineq} twice with $h=\frac{1}{2}$ and $a=1$ we see that $v(2) \leq 7.5 < 2(1+1)^2 = 8$. This proves that for $m=1$ indeed there exists a $C$ so that $\phi(m+1)=0$ thus almost proving theorem \ref{mainrule}. The only thing left is to prove that there cannot exist any hcscK metrics.\\
\indent Indeed, if such a metric exists then there is a solution to \ref{initval} satisfying $\phi>0$ (and hence $v>2\gamma^2$) and $A=0$. In this case $B=-\frac{12}{(m+1)^2-1}$ and $C= 4+\frac{8}{(m+1)^2-1}$. This implies that $\displaystyle \int _1 ^{m+1} p(\gamma)\gamma d\gamma =2$. Therefore,
\begin{gather}
v(m+1) =4+\displaystyle \int _1^{m+1} 2\sqrt{2v} d\gamma \nonumber 
>4+4\int _1 ^{m+1} \gamma d\gamma = 4+2((m+1)^2-1) > 2(m+1)^2. \nonumber
\end{gather}  
This is a contradiction.
\section{Perturbation results}
In this section we prove theorem \ref{mainperturb}. Let $(X,\omega)$ be a compact K\"ahler surface, $\tilde{\omega}$ be any closed real $(1,1)$-form, and let $\mathcal{B}_1$ and $\mathcal{B}_2$ be spaces of $C^{4,\alpha}$ functions on $X$ with zero average and $C^{0,\alpha}$ $(2,2)$-forms on $X$ with zero average respectively. Denote by $U$ an open subset of $\mathbb{R}\times\mathcal{B}_1$ consisting of  $(t,\phi) \in \mathbb{R}\times \mathcal{B}_1$ such that $\omega+ t\tilde{\omega}+\spbp \phi >0$. Consider the following map $L: U \rightarrow \mathcal{B}_2$.
\begin{gather}
L(t,\phi) = c_2 (\omega+t\tilde{\omega}+\spbp \phi) - \frac{\displaystyle \int _X c_2}{\displaystyle \int _X (\omega +t\tilde{\omega})^2} (\omega+t\tilde{\omega}+\spbp \phi)^2  
\label{defiL}
\end{gather}
Clearly $L^{-1}(0)$ consists of hcscK metrics in the K\"ahler class $[\omega +t\tilde{\omega}]$. Assume now that $\omega$ is an hcscK metric satisfying $c_2(\omega) = \frac{\lambda}{2(2\pi)^2} \omega^2$. In order to apply the implicit function theorem on Banach manifolds, we will linearise $L$ with respect to $\phi$ at $\phi=0, t=0$. Indeed,
\begin{gather}
DL_{t=0,\phi=0}(\psi) = \frac{d}{ds}\vert_{s=0} c_2(\omega+s\spbp \psi) - \frac{\lambda}{(2\pi)^2} \omega \spbp \psi. 
\label{linear}
\end{gather}
We have a small lemma in the making.
\begin{lemma}
The linearisation $DL$ given by equation \ref{linear} is uniformly elliptic in $\psi$ if the holomorphic sectional curvature has a definite sign throughout $X$.
\label{elliptic}
\end{lemma}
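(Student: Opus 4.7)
The plan is to compute the principal symbol of the fourth-order operator $DL$ from \eqref{linear} and reduce its nonvanishing at any nonzero cotangent vector to the definite-sign hypothesis on the holomorphic sectional curvature. The summand $\frac{\lambda}{(2\pi)^2}\omega\wedge\spbp\psi$ is only second order in $\psi$ and hence contributes nothing to the principal symbol, so the entire computation reduces to linearizing the map $\psi\mapsto c_2(\omega+s\spbp\psi)$ at $s=0$.

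I would work at a point $p$ in K\"ahler normal coordinates, so that $g_{i\bar j}(p)=\delta_{ij}$ and $dg(p)=0$. In these coordinates the principal part of $\delta\Theta^k_l$ is $-\partial_a\partial_{\bar b}\partial_l\partial_{\bar k}\psi\,dz^a\wedge d\bar z^b$, so its symbol at a holomorphic cotangent vector $v\in T^{*1,0}_p X$ is the $(1,1)$-form $-v_av_l\bar v_b\bar v_k\,dz^a\wedge d\bar z^b$ multiplying $\psi$. Expanding $c_2=\frac{1}{(2\pi)^2}\det(\sqrt{-1}\Theta)$ via the $2\times 2$ cofactor formula and linearizing produces four wedge products of $(1,1)$-forms; the two-dimensional identity $dz^a\wedge d\bar z^b\wedge dz^i\wedge d\bar z^j=-\epsilon^{ai}\epsilon^{bj}\,dz^1\wedge dz^2\wedge d\bar z^1\wedge d\bar z^2$ converts each of these into a scalar multiple of the K\"ahler volume form.

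Introducing the ``Hodge dual'' direction $w^a:=\epsilon^{ai}v_i$ (so that $|w|_\omega=|v|_\omega$) and using the K\"ahler symmetry $R^k_{l i\bar j}=R_{l\bar k i\bar j}$, I expect a direct computation to collapse the four scalar contributions into a single term proportional to $R_{w\bar w w\bar w}\,\psi$. Since $R_{w\bar w w\bar w}=|w|_\omega^4\,H(w)$ with $H$ the holomorphic sectional curvature, the principal symbol of $DL$ at $v\ne 0$ then equals, up to a fixed nonzero multiplicative constant, $|v|_\omega^4\,H(w)\,\psi\cdot\omega^2$. The hypothesis that $H$ has definite sign on the compact manifold $X$ yields a uniform bound $|H|\geq c>0$, so the symbol is bounded away from zero on the unit cosphere bundle; this is exactly uniform ellipticity.

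The main technical step is the algebraic collapse of the four contributions into the single contraction $R_{w\bar w w\bar w}$. I would verify this in a frame adapted to $v$ (take $v_1=1$, $v_2=0$, so $w=-\partial/\partial z^2$): only one of the four wedge products survives and it directly produces $R_{2\bar 2 2\bar 2}$, consistent with the claimed formula; the general case then follows by $U(2)$-equivariance, so the step is tedious but not a serious obstruction.
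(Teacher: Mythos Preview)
Your proof is correct and follows essentially the same route as the paper's: discard the second-order term, compute the principal symbol of the fourth-order piece, and identify it as a holomorphic sectional curvature, so that definite sign plus compactness yield uniform ellipticity. The only cosmetic difference is that the paper packages the linearisation of $c_2$ via Donaldson's Bott--Chern formula, obtaining $DL_{t=0,\phi=0}(\psi)=-\tfrac{2}{(2\pi)^2}\,\delb\partial\,P\bigl(\omega^{i\bar k}\psi_{j\bar k},\Theta\bigr)-\tfrac{\lambda}{(2\pi)^2}\,\omega\wedge\spbp\psi$ with $P$ the polarisation of $\det$, whereas you expand $\det(\sqrt{-1}\Theta)$ directly by cofactors; since $P$ \emph{is} that cofactor bilinear form, the two computations coincide. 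Your identification of the relevant direction as the Hodge dual $w$ of $v$ (so that for $v=dz^1$ the symbol picks out $R_{2\bar 22\bar 2}$) is in fact more precise than the paper's shorthand $\Theta(\xi\wedge\bar\xi,\xi\wedge\bar\xi)$, though of course either way one lands on a holomorphic sectional curvature and the conclusion is unaffected.
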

\begin{proof}
Let $P(A,B)$ be the polarisation of the determinant of $2\times 2$ matrices $A$ and $B$, i.e., if $A$ and $B$ are thought of as $2$-forms then $P(A,B) =\frac{A \wedge B}{2}$. Proposition (6) of \cite{Don} states (in this special case) that there exists a smoothly varying family of Bott-Chern forms $bc_2(h,k)$ such that the following holds.
\begin{gather}
c_2(\omega+s\spbp \psi)-c_2(\omega) = -\frac{\spbp}{2\pi} bc_2 (\omega+s\pbp \psi, \omega), \ \mathrm{and} \ \nonumber \\
\frac{d}{ds} bc_2 (\omega+s\spbp \psi, \omega) = -2\sqrt{-1}P\Bigg( h^{-1} \frac{dh}{ds},\frac{\sqrt{-1}}{2\pi}\Theta_{h}\Bigg), \nonumber 
\end{gather}
where $h = \omega+s\spbp \psi$ and $\Theta_h$ is the curvature of $h$. Using this result, we may compute the linearisation of $L$ to be the following.
\begin{gather}
DL_{\phi=0, t=0}(\psi) = -2\frac{1}{(2\pi)^2} \bar{\partial} \partial P\left(\omega^{i\bar{k}}\sqrt{-1}\frac{\partial^2\psi}{dz^{j}d\bar{z}^k}, \Theta \right) - \frac{\lambda}{(2\pi)^2} \omega \spbp \psi,
\label{linearsimple}
\end{gather}
where $\Theta$ is the curvature of $\omega$. In order to find the principal symbol let us choose coordinates such that $\omega =\sqrt{-1}\sum dz^{i}\wedge d\bar{z}^i$. Replacing $\partial$ by a covector $\xi$ we see that the prinicipal symbol is $\frac{2}{(2\pi)^2}\Theta (\xi \wedge \bar{\xi}, \xi\wedge \bar{\xi}) dz^1 \wedge d\bar{z}^1 \wedge dz^2 \wedge d\bar{z}^2$ which is just the holomorphic sectional curvature. Hence, it having a definite sign (along with compactness of $X$) implies uniform ellipticity.
\end{proof}
From now onwards we will specialise to $(X,\omega)$ being one of the symmetric surfaces in the statement of theorem \ref{mainperturb}. In the cases considered in theorem \ref{mainperturb} the holomorphic sectional curvature has a sign and hence by lemma \ref{elliptic} equation \ref{linearsimple} is uniformly elliptic of the fourth order. By the Fredholm alternative, it is surjective if and only if the kernel of its formal adjoint operator is trivial. It is easy to see that $DL_{\phi=0, t=0}$ is symmetric on the space of smooth functions. The following lemma implies that $DL$ is an isomorphism.
\begin{lemma}
If $(X,\omega)$ is a K\"ahler surface in theorem \ref{mainperturb}, then the kernel of $DL_{\phi=0}$ is trivial.
\end{lemma}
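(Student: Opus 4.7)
The plan is to combine self-adjointness of $DL_{\phi=0,t=0}$ with the parallel-curvature structure of the locally symmetric background to reduce triviality of the kernel to the absence of nonzero holomorphic vector fields on $X$. First I would pair any supposed kernel element $\psi\in\ker DL\cap\mathcal{B}_1$ against itself: since $DL_{\phi=0,t=0}$ is symmetric on smooth functions (as noted just before the lemma), this forces $\langle DL(\psi),\psi\rangle_{L^2}=0$. Applying Stokes' theorem to move the $\bar{\partial}\partial$ in the first summand of (\ref{linearsimple}) onto $\psi$, and using that $\Theta$ is parallel on the locally symmetric backgrounds of theorem \ref{mainperturb} so that no derivatives fall on $\Theta$, the pairing reduces to a pointwise-quadratic integral in the first and second covariant derivatives of $\psi$.

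Next I note that Hamiltonian potentials of holomorphic vector fields are tautologically in $\ker DL$: because the equation $c_2(\omega)=\tfrac{\lambda}{2(2\pi)^2}\omega^2$ is diffeomorphism-invariant and $\omega$ satisfies it, for any real holomorphic vector field $V$ on $X$ with Hamiltonian potential $\psi$ one has $DL(\psi)=\mathcal{L}_V\bigl(c_2(\omega)-\tfrac{\lambda}{2(2\pi)^2}\omega^2\bigr)=0$. I aim to show that, conversely, nothing else lies in the kernel. In the complex hyperbolic case I would use constant holomorphic sectional curvature $H<0$, via $R_{i\bar j k\bar l}=-\tfrac{|H|}{2}(g_{i\bar j}g_{k\bar l}+g_{i\bar l}g_{k\bar j})$, to expand $P(\text{Hess}\,\psi,\Theta)$ as an explicit algebraic combination of $(\Delta\psi)\omega$ and $\spbp\psi$; in the product case I would split $\Theta$ along the two hyperbolic factors and perform the analogous expansion. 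After one further integration by parts the pairing from Step 1 becomes an identity
\begin{equation*}
0=\int_X A\,|\mathcal{D}\psi|^2\,dV+\int_X B\,|\spbp\psi|^2_\omega\,dV+\int_X C\,|\partial\psi|^2_\omega\,dV,
\end{equation*}
where $\mathcal{D}\psi:=\bar{\partial}\,\nabla^{(1,0)}\psi$ is the usual operator whose kernel consists exactly of Hamiltonian potentials, and $A,B,C>0$ are constants determined by the background curvature and by $\lambda=2(2\pi)^2 c_2(\omega)/\omega^2$.

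This forces $\mathcal{D}\psi=0$, so $\nabla^{(1,0)}\psi$ is a holomorphic vector field. Both $\mathbb{D}^2/\Gamma$ and $(\mathbb{D}^1/\Gamma_1)\times(\mathbb{D}^1/\Gamma_2)$ are K\"ahler--Einstein with strictly negative Einstein constant (in the product case as a sum of two such), hence have negative-definite Ricci form, and the Bochner--Kodaira identity rules out any nonzero holomorphic vector field on such manifolds. Therefore $\nabla^{(1,0)}\psi=0$, $\psi$ is constant, and the zero-average condition built into $\mathcal{B}_1$ forces $\psi=0$.

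The main obstacle I expect is the algebraic step: verifying that the cross-terms between the fourth-order piece $-\tfrac{2}{(2\pi)^2}\bar{\partial}\partial P(\text{Hess}\,\psi,\Theta)$ and the zeroth-order piece $-\tfrac{\lambda}{(2\pi)^2}\omega\wedge\spbp\psi$ really do combine into a manifestly non-negative quadratic form whose leading term is $|\mathcal{D}\psi|^2$. The positivity ultimately comes from $\lambda$ being fixed by the hcscK condition at the background rather than being a free parameter, but making the cancellation explicit requires careful bookkeeping with the curvature identities for a locally symmetric K\"ahler surface.
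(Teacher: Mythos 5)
Your opening move coincides with the paper's: pair $\psi \in \ker DL$ against itself and integrate by parts, which in the paper yields the identity $2\int_X \pbp\psi \wedge P(\psi_{i\bar{j}},\Theta) + \lambda\int_X \sqrt{-1}\,\partial\psi\wedge\bar{\partial}\psi\wedge\omega = 0$. From there, however, your argument has a genuine gap at its center: the claimed decomposition $0=\int_X A\,|\mathcal{D}\psi|^2 + B\,|\spbp\psi|^2_\omega + C\,|\partial\psi|^2_\omega$ with constants $A,B,C>0$ is never derived, and you yourself flag the required ``careful bookkeeping'' as the main obstacle --- but that bookkeeping \emph{is} the proof; everything else in your outline is scaffolding around an unverified identity. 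Worse, the proposed shape of the identity is misdirected. When $\bar{\partial}\partial$ is moved onto $\psi$, the fourth-order piece produces a \emph{pointwise} quadratic form in the $(1,1)$-Hessian alone: in normal coordinates with $\psi_{i\bar{j}}=\mathrm{diag}(\mu_1,\mu_2)$ it equals $-\Theta(\eta,\eta)$ evaluated on $\eta=\sum_i \mu_i\,\frac{\partial}{\partial z_i}\wedge\frac{\partial}{\partial\bar{z}_i}$, times a positive multiple of the volume form. By contrast $|\mathcal{D}\psi|^2=|\bar{\partial}\nabla^{(1,0)}\psi|^2$ is a quadratic form in the $(2,0)$-Hessian, which simply does not appear; manufacturing it would require further integrations by parts and Ricci identities that you do not carry out, and there is no reason the resulting coefficients would be constants, let alone all positive.

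The paper closes the argument with a much simpler observation that your proposal misses: for $\mathbb{D}^2/\Gamma$ and for $\mathbb{D}^1/\Gamma_1\times\mathbb{D}^1/\Gamma_2$ with their canonical metrics the curvature operator is nonpositive, so $-\Theta(\eta,\eta)\geq 0$ pointwise, while $\lambda>0$ for these surfaces. Both terms in the integral identity are therefore nonnegative and must vanish separately; vanishing of the gradient term already gives $\nabla\psi=0$, and the zero-average normalization gives $\psi=0$. In particular the entire detour through $\mathcal{D}\psi=0$, holomorphic vector fields, and the Bochner argument is unnecessary --- and internally redundant in your own scheme, since a strictly positive $C$ in your identity would force $\partial\psi=0$ directly, leaving the $A\,|\mathcal{D}\psi|^2$ term with no role. (Your observation that holomorphy potentials lie tautologically in the kernel is correct but vacuous here, as these negatively curved surfaces carry no nonzero holomorphic vector fields.) As it stands the proposal asserts rather than proves the decisive positivity; replacing the conjectured decomposition by the pointwise sign of the curvature operator, as the paper does, is what completes the proof.
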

\begin{proof}
 Suppose $DL(\psi) =0$, multiplying and integrating by parts we see that (implicitly writing in terms of normal coordinates)
\begin{gather}
2\displaystyle \int_X \pbp \psi \wedge P(\psi_{i\bar{j}},\Theta)+\lambda \int_X \sqrt{-1}\partial \psi \wedge \delb \psi \wedge \omega  = 0.
\label{intparts}
\end{gather}
For the surfaces in question it is clear that $\lambda >0$. Suppose we choose normal coordinates such that $\psi_{i\bar{j}} = diag(\mu_1, \mu_2)$, then
\begin{align}
\displaystyle \pbp \psi \wedge P(\psi_{i\bar{j}},\Theta) &= \sum \mu_i dz^i \wedge d\bar{z}^i \wedge \frac{\mu_1 \Theta _{2\bar{2}} + \mu_2 \Theta_{1\bar{1}}}{2} \nonumber \\
&= -(\mu_1 ^2 \Theta _{2\bar{2}2\bar{2}} + 2\mu_1 \mu_2 \Theta _{1\bar{1}2\bar{2}} + \mu_2 ^2 \Theta _{1\bar{1}1\bar{1}}) \sqrt{-1} ^2 dz^1 \wedge d\bar{z}^1 \wedge dz^2 \wedge d\bar{z}^2 \nonumber \\
&= -\Theta (\sum \mu_i \frac{\partial}{\partial z_i} \wedge \frac{\partial} {\partial \bar{z_i}},\sum \mu_i \frac{\partial}{\partial z_i} \wedge \frac{\partial} {\partial \bar{z_i}})  \sqrt{-1} ^2 dz^1 \wedge d\bar{z}^1 \wedge dz^2 \wedge d\bar{z}^2. 
\label{normalsimple}
\end{align}
 For $X= \mathbb{D}^2/ \Gamma$ and $X=\mathbb{D}^1/\Gamma_1 \times \mathbb{D}^1/ \Gamma_2$ equipped with their ``canonical" metrics, the curvature operator is nonpositive. Hence $\nabla \psi=0$ and thus $\psi=0$.
\end{proof}
By the Fredholm alternative and the Schauder estimates $DL$ is indeed an isomorphism. Therefore by the implicit function theorem on Banach spaces, for small $t$ there exists a unique hcscK metric in a $C^{2,\alpha}$ neighbourhood of $\omega$ in the class $[\omega +t\tilde{\omega}]$ depending smoothly on $t$. In particular, for some ball quotients we can choose $\tilde{\omega}$ to be in a  cohomology class that is not a multiple of the first Chern class and therefore get a non K\"ahler-Einstein example of an hcscK metric.
\section{Bando-Futaki invariants of projective hypersurfaces}
Let $M$ be a compact K\"ahler manifold. The Bando-Futaki invariants associated to a given K\"ahler class $\omega$ and a given holomorphic vector field $Y$ (henceforth denoted as $\mathcal{F} _k (Y,\omega)$) are obstructions to the harmonicity of the Chern forms $c_k$ of the holomorphic tangent bundle. By Hodge theory there exists a smooth function $g_k$ such that $$c_k - H(c_k) = \frac{\sqrt{-1}}{2\pi} \partial \bar{\partial} g_k$$ 
where $H(c_k)$ is the harmonic projection of $c_k$. The Bando-Futaki invariants are defined as
$$\cF _k (Y, \omega) = \displaystyle \int _{M} L_{Y} g_k \wedge \omega ^{n-k+1}.$$ where $L_Y$ is the lie derivative with respect to $Y$. \\
\indent The fact that these functions are actually invariants of the K\"ahler class was proven by Bando \cite{Ban}. In Liu's paper \cite{Liu} these invariants were computed for a smooth, degree $d$ hypersurface $M$ of $\mathbb{CP}^n$ for the Fubini-Study K\"ahler class. Liu speculated that an ``abstraction" of the procedure used is desirable (in order to compute the same for complete intersections). We simplify some aspects of Liu's proof (whilst following the same basic strategy) thus providing a possible abstraction of that method.\\
\indent An important tool in our calculations is the following linear algebra lemma which has proven to be quite useful in the calculation of characteristic forms \cite{PiTa}.
\begin{lemma} \label{Algebra} Let $A$ be a matrix over $\C$ or over a commutative algebra $\mathcal{A}$ over $\mathbb{C}$, where in the latter case all its matrix elements are nilpotent. Suppose that $A^2=aA$ for some
$a\in\mathcal{A}$, and that $1-\lambda a$ is invertible for all $\lambda$ in some domain $D\subset\C$ containing $0$. Then for such $\lambda$ we have
$$ (I-\lambda A)^{-1} = I+\frac{\lambda}{1-\lambda a} A,$$ and
$$\det(I-\lambda A) = \exp\left\{\frac{\Tr A}{a}\log(1-\lambda a)\right\}.$$ 
In particular, if $\alpha_{i}, \beta_{i}$, $i=1,\dots,k$, are odd elements in some  graded-commutative algebra over $\C$ (e.g., the algebra of complex differential forms on $X$), and $A_{ij}=\alpha_{i}\beta_{j}$, then  $A^2=aA$ where $a=-\Tr A=-\sum_{i=1}^{k}\alpha_{i}\beta_{i}$, and
$$\det(I-\lambda A)=\frac{1}{1-\lambda a}.$$
\end{lemma}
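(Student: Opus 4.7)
The plan is to handle the three claims by leveraging the master identity $A^k = a^{k-1}A$ for all $k\geq 1$, which follows by immediate induction from the hypothesis $A^2=aA$ together with the centrality of $a\in\mathcal{A}$.

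For the inverse formula, I would simply verify by direct multiplication: expanding
$$(I-\lambda A)\Bigl(I+\tfrac{\lambda}{1-\lambda a}A\Bigr) = I + \left(\tfrac{\lambda}{1-\lambda a}-\lambda - \tfrac{\lambda^2 a}{1-\lambda a}\right)A,$$
the coefficient of $A$ collapses via $\lambda-\lambda(1-\lambda a)-\lambda^2 a = 0$, leaving $I$. The invertibility of $1-\lambda a$ on $D$ makes this algebra legitimate.

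For the determinant formula, I would use the standard identity $\det(I-\lambda A) = \exp\Tr\log(I-\lambda A)$ and expand the logarithm as a formal power series. Substituting the master identity gives
$$\log(I-\lambda A) = -\sum_{k\geq 1}\tfrac{\lambda^k A^k}{k} = -A\sum_{k\geq 1}\tfrac{\lambda^k a^{k-1}}{k},$$
which one reads symbolically as $(A/a)\log(1-\lambda a)$. The main subtlety is that one should not literally divide by $a$: the right-hand side is well defined (it converges in the analytic setting over $\mathbb{C}$ and truncates after finitely many terms in the nilpotent setting), and after taking traces and exponentiating one obtains the stated closed form.

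Finally, for $A_{ij}=\alpha_i\beta_j$ with $\alpha_i,\beta_j$ odd in a graded-commutative algebra, I would compute
$$(A^2)_{ij}=\sum_k \alpha_i\beta_k\alpha_k\beta_j = -\sum_k \alpha_i\alpha_k\beta_k\beta_j = -\Bigl(\sum_k\alpha_k\beta_k\Bigr)\alpha_i\beta_j,$$
using the Koszul sign $\beta_k\alpha_k=-\alpha_k\beta_k$ for odd-odd commutation, and noting that each $\alpha_k\beta_k$ is even and hence central. This proves $A^2=aA$ with $a=-\Tr A$, and substituting $\Tr A/a = -1$ into the general determinant formula collapses it to $\det(I-\lambda A)=(1-\lambda a)^{-1}$. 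The only genuine obstacle in the whole argument is the symbolic-versus-literal reading of $A/a$ in the logarithm expansion; once that is granted, everything else is routine algebra.
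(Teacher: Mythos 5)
Your proposal is correct and takes essentially the same route as the paper: both rest on the explicit inverse formula and the $\det$--$\Tr\log$ relation, your term-by-term expansion of $\log(I-\lambda A)$ via $A^{k}=a^{k-1}A$ being just the integrated form of the paper's identity $\frac{d}{d\lambda}\log\det(I-\lambda A)=-\Tr\{A(I-\lambda A)^{-1}\}$. Your added details --- the direct multiplication check of the inverse, the Koszul-sign computation giving $A^{2}=-(\Tr A)A$ (which the paper asserts in the statement without proof), and the caveat that $\Tr A/a$ must be read symbolically rather than as a literal division --- are all sound.
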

\begin{proof}  For $\lambda\in D$ we have
\begin{equation*} 
(I-\lambda A)^{-1}=I +\frac{\lambda}{1-\lambda a}\,A.
\end{equation*}
To prove the formula for the determinant, we use the identity
$$\frac{d}{d\lambda}\log\det(I-\lambda A)=-\Tr \left\{A(I-\lambda A)^{-1}\right\},\quad\lambda\in D.$$
It is well-known for matrices over $\C$ (and easily proved using the Jordan canonical form), and for matrices with nilpotent entries it easily follows from the definition of the determinant. Using formula for the inverse, we obtain
$$\frac{d}{d\lambda}\log\det(I-\lambda A)=-\frac{\Tr A}{1-\lambda a}=\frac{d}{d\lambda}\frac{\Tr A}{a}\log (1-\lambda a),$$
and integrating from $0$ to $\lambda$ using $\det I=1$ gives the result.
\end{proof}
 Our starting point of Liu's formula is the expression for the curvature of the induced metric on the hypersurface $M$ defined by $F(Z_0,Z_1,\ldots,Z_n)=0$ where $F$ is a homogeneous polynomial with non-zero gradient. On the set where $Z_0 \neq 0$, define the complex coordinates $z_i = \frac{Z_i}{Z_0}$ for $i\geq 1$. Defining $f = F[1,\frac{Z_1}{Z_0}, \ldots,\frac{Z_n}{Z_0}]$, if $\frac{\partial f}{\partial z_1} \neq 0$, then by the implicit function theorem $z_1$ is a holomorphic function of the other coordinates. Let $a_i = \frac{\partial z_1}{\partial z_i}$, $\widetilde{g}$ be the metric on $M$ induced by the Fubini-study metric $\omega _{FS} = \frac{\sqrt{-1}}{2\pi}\sum _{i,j} \left ( \frac{\delta _{ij}}{1+\vert z \vert^2}-\frac{z_i \bar{z}_j}{(1+\vert z \vert^2)^2} \right )dz_i \wedge d\bar{z}_j$, $F_k = \frac{\partial F}{\partial Z_k}$, and $\rho =  \frac{\sum _{k=0} ^{k=n}\vert F_k \vert ^2}{(1+\vert z \vert^2)\vert F_1 \vert ^2}$. It is easy to see that\footnote{Either using \cite{Liu} or by noting that one may compute the inverse of the metric and hence the curvature by using lemma \ref{Algebra}}     
\begin{eqnarray}
\widetilde{g_{\mu \nu}} &=& \frac{\delta _{\mu \nu} + a_{\mu} \bar{a}_{\nu}}{1+\vert z \vert^2} - \frac{(\bar{z}_{\mu} + \bar{z}_1 a_{\mu})(z_{\nu} + z_1 \bar{a}_{\nu})}{(1+\vert z \vert^2)^2} \nonumber \\
\Theta _{\mu \nu} &=&  \widetilde{g_{ij}} dz_{i} \wedge \bar{dz_{j}} \delta_{\mu \nu} - \widetilde{g_{\mu j}} \bar{dz_j} \wedge dz_{\nu} - \frac{1}{\rho} ( \frac{\partial a_{\mu}}{\partial z_i}dz_i \wedge \frac{\partial \bar{a}_s}{\partial \bar{z}_j} \widetilde{g^{\nu s}}\bar{dz_j}) \nonumber 
\end{eqnarray}
Now, we shall state and prove lemma $2.3$ of \cite{Liu}
\begin{lemma}
The $q$th Chern form of the degree $d$ hypersurface $M$ is 
\begin{gather}
c_{q}(\Theta) = \sum _{k=0}^{q} \alpha _{qk} (\frac{\sqrt{-1}}{2\pi}\omega) ^k \wedge (\frac{\sqrt{-1}}{2\pi} \partial \bar{\partial} \xi )^{q-k} \nonumber
\end{gather}
where  $\xi = \log (\frac{\sum _{k=0}^{n}\vert F_k \vert^2}{(\sum _{k=0}^{k=n} \vert Z_k \vert ^2)^{d-1}}) $, and
\begin{eqnarray}
\alpha _{00} &=& 1 \nonumber \\
\alpha _{qq} &=& \binom {n+1}{q} -d\alpha _{(q-1)(q-1)} \nonumber \\
\alpha _{q(q-k)} &=& -[d\alpha _{(q-1)(q-k-1)}+\alpha _{(q-1)(q-k)}] \ for \ k= 1,\ldots, q-1 \nonumber \\
\alpha _{q0} &=& (-1)^q \nonumber 
\end{eqnarray}
where $q$ ranges from $1$ to $n-1$.
\label{Chernforms}
\end{lemma}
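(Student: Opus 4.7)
The plan is to compute the total Chern polynomial $c_s := \det(I + s\tfrac{\sqrt{-1}}{2\pi}\Theta)$ in closed rational form by invoking Lemma \ref{Algebra}, and then extract the individual $c_q$ by power series expansion. First I would write $\Theta = \Omega\, I - B - C$, where $\Omega = \widetilde g_{ij}\, dz_i \wedge d\bar z_j$ is the scalar form with $\tfrac{\sqrt{-1}}{2\pi}\Omega = \omega_M$ (the induced Fubini-Study form on $M$), and observe that after scaling by $\tfrac{\sqrt{-1}}{2\pi}$ each of $\tilde B, \tilde C$ carries the rank-one outer-product structure $\alpha_\mu \wedge \beta_\nu$ required by Lemma \ref{Algebra}: explicitly $\alpha_\mu = \tfrac{\sqrt{-1}}{2\pi}\widetilde g_{\mu j}\, d\bar z_j$, $\beta_\nu = dz_\nu$ for $\tilde B$, and $\gamma_\mu = \rho^{-1/2}\tfrac{\partial a_\mu}{\partial z_i}\, dz_i$, $\delta_\nu = \tfrac{\sqrt{-1}}{2\pi}\rho^{-1/2}\widetilde g^{\nu s}\tfrac{\partial \bar a_s}{\partial \bar z_j}\, d\bar z_j$ for $\tilde C$.

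The key observation I would exploit is the vanishing $\tilde B\,\tilde C = \tilde C\,\tilde B = 0$. The inner contraction in $(\tilde B\,\tilde C)_{\mu\nu}$ reduces to the factor $\sum_\sigma \beta_\sigma \wedge \gamma_\sigma \propto \sum_{\sigma, i} \tfrac{\partial^2 z_1}{\partial z_i \partial z_\sigma}\, dz_\sigma \wedge dz_i$, which vanishes because a symmetric tensor is wedged against an antisymmetric form; the vanishing of $\tilde C\,\tilde B$ is analogous after using $\sum_\sigma \widetilde g^{\sigma s}\widetilde g_{\sigma k} = \delta^s_k$ to collapse the matrix factor and reduce to the conjugate identity. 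Consequently $(I - \mu\tilde B)(I - \mu\tilde C) = I - \mu(\tilde B + \tilde C)$, so the $(n-1)\times(n-1)$ determinant factors into the product of two scalar factors supplied by Lemma \ref{Algebra}. Factoring $1+s\omega_M$ out of $I + s\tfrac{\sqrt{-1}}{2\pi}\Theta$ (with $\mu = s/(1+s\omega_M)$) and using $\Tr\tilde B = -\omega_M$, a short manipulation gives
\[
c_s(\Theta) = \frac{(1+s\omega_M)^{n+1}}{1 + s(\omega_M + \Tr\tilde C)}.
\]

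Next I would identify $\Tr\tilde C$. Using the implicit function identity $a_\mu = -F_\mu/F_1$, its first derivatives, and the Euler relation $\sum_k Z_k F_k = dF$ for the homogeneous polynomial $F$, one shows that $\Tr\tilde C = \tfrac{\sqrt{-1}}{2\pi}\partial\bar\partial\log\rho + \omega_M$, and then (by pluriharmonicity of $\log|Z_0|^2$ and $\log|F_1|^2$ in the chart where $Z_0 F_1 \neq 0$) $\Tr\tilde C = \tfrac{\sqrt{-1}}{2\pi}\partial\bar\partial\log\sum_k|F_k|^2 = (d-1)\omega_M + \tfrac{\sqrt{-1}}{2\pi}\partial\bar\partial\xi$. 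Setting $y := \tfrac{\sqrt{-1}}{2\pi}\partial\bar\partial\xi$ the denominator becomes $1 + s(d\omega_M + y)$, so the closed form
\[
c_s(\Theta) = \frac{(1+s\omega_M)^{n+1}}{1 + s(d\omega_M + y)}
\]
emerges, recognizable as the adjunction formula at the level of Chern-Weil representatives.

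The recurrence for $\alpha_{qk}$ will then drop out of the resulting functional equation. Multiplying through by $1 + s(d\omega_M + y)$ and reading off the coefficient of $s^q$ gives $c_q + (d\omega_M + y)\, c_{q-1} = \binom{n+1}{q}\omega_M^q$ for each $q = 1,\dots,n-1$ (with $c_0 = 1$); substituting the ansatz $c_q = \sum_k \alpha_{qk}\,\omega_M^k \wedge y^{q-k}$ and matching the terms $\omega_M^k \wedge y^{q-k}$ produces exactly the three stated cases of the recurrence ($\alpha_{qq} = \binom{n+1}{q} - d\alpha_{(q-1)(q-1)}$, $\alpha_{q(q-k)} = -[d\alpha_{(q-1)(q-k-1)} + \alpha_{(q-1)(q-k)}]$ for intermediate $k$, and $\alpha_{q0} = (-1)^q$). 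The hardest step will be the trace identification for $\tilde C$: while the vanishing $\tilde B\,\tilde C = 0$ is clean and purely structural, collapsing the intrinsic expression $\tfrac{\sqrt{-1}}{2\pi}\rho^{-1}\widetilde g^{\mu s}\tfrac{\partial a_\mu}{\partial z_i}\tfrac{\partial \bar a_s}{\partial \bar z_j}\,dz_i \wedge d\bar z_j$ down to the tidy Hessian $\tfrac{\sqrt{-1}}{2\pi}\partial\bar\partial\log\sum_k|F_k|^2$ requires careful use of homogeneity and the implicit function theorem, and it is essentially the only computational content of the lemma once the algebraic machinery is in place.
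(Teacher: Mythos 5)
Your proposal is correct and is essentially the paper's own argument: both decompose $\Theta$ into a scalar part plus two odd outer-product matrices, invoke Lemma \ref{Algebra} to reach the closed form $\det(I+t\Theta)=(1+t\omega)^{n+1}/\bigl(1+t(d\omega+\partial\bar{\partial}\xi)\bigr)$, and extract the $\alpha_{qk}$ from the coefficient of $t^q$. Your variations are streamlinings within that strategy rather than a different route: you factor the determinant in one stroke via $\tilde{B}\tilde{C}=0$ (the same Hessian-symmetry vanishing the paper exploits as $w_k\alpha_k=0$ inside its sequential two-step factorization with the composite matrix $A=(I+\lambda vw)^{-1}\alpha\beta$), you read the recurrence off the functional equation $c_q+(d\omega+y)c_{q-1}=\binom{n+1}{q}\omega^q$ instead of the paper's explicit double binomial sum, and you propose to re-derive the trace identity $\Tr \tilde{C}=(d-1)\omega+\frac{\sqrt{-1}}{2\pi}\partial\bar{\partial}\xi$ which the paper (legitimately) just cites from Liu.
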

\begin{proof}
 We use lemma \ref{Algebra} quite often in what follows. For the sake of brevity we denote $a \wedge b$ by $ab$ from now onwards.
\begin{align}
\Theta _{ij} &= \omega \delta _{ij} + v_i w_j + \alpha _i \beta _j \nonumber \\
\det(I+t\Theta) &=  \det(\delta _{ij}(1+t\omega)+t(v_i w_j + \alpha _i \beta _j) \nonumber \\
&= (1+t\omega)^{n-1} \det(\delta_{ij}+\frac{t}{1+t\omega}(v _i  w _j + \alpha _i  \beta _j)) \nonumber \\
= (1+t\omega)^{n-1} \det(\delta_{ij}+\frac{t}{1+t\omega} v _i w _j) &\times 
 \det(\delta_{ij}+ (\delta_{ab}+\frac{t}{1+t\omega}v_a w_b)^{-1}\frac{t}{1+t\omega} \alpha _i \beta _j ) \nonumber \\
&= (1+t\omega)^{n-1} \det(\delta_{ij}+\lambda v_i  w_j )\det(I+\lambda A) \nonumber 
\end{align} 
where  $\omega = \widetilde{g}_{\mu \nu} dz_{\mu} \wedge d\bar{z}_{\nu}$, $v_{\mu} = -\widetilde{g}_{\mu j} d\bar{z}_j$, $w_{\nu} = dz_{\nu} $, $\alpha _{\mu} = -\frac{1}{\rho}\frac{\partial a_{\mu}}{\partial z_i} dz_i $, $\beta _{\nu} = \frac{\partial \bar{a}_s}{\partial \bar{z}_j} \tilde{g}^{\nu s} d\bar{z}_j$, $\lambda = \frac{t}{1+t\omega} $, $u_i =\frac{t}{(1+t\omega)+t w_j \wedge v_j}v_i$, and $A_{ij}= (\delta_{ab}+\frac{t}{1+t\omega}v_a w_b)^{-1} \alpha _i \beta _j $. \\
Now notice that $A^2=(\beta _i  \alpha _i - \beta _j  u_j  w_k  \alpha _k) A = -tr(A) A$.  Using lemma \ref{Algebra}, we see that 
\begin{eqnarray}
\det(1+t \Theta) &=& (1+t\omega)^{n+1}\frac{1}{1+t\omega + \frac{t}{\rho}\frac{\partial a_{\mu}}{\partial z_i} dz_i \wedge \frac{\partial \bar{a}_s}{\partial \bar{z}_j} \tilde{g}^{\mu s} d\bar{z}_j} \nonumber 
\end{eqnarray}
From \cite{Liu} we see that $\frac{1}{\rho}\frac{\partial a_{\mu}}{\partial z_i} dz_i \wedge \frac{\partial \bar{a}_s}{\partial \bar{z}_j} \tilde{g}^{\mu s} d\bar{z}_j = (d-1) \omega + \partial \bar{\partial}\xi$. Hence, we see that the coefficient of $t^k$ in the above expression is 
\begin{eqnarray}
c_a({\Theta}) &=& \sum _b \sum _l \binom {n+1}{b} d^l \omega ^{b+l} (-1)^{a-b}\binom{a-b}{l} (\partial \bar{\partial} \xi)^{a-b-l} \nonumber \\
&=& \sum _{k=0} ^{a} \sum _{b=0} ^k \binom{n+1}{b} d^{k-b} \omega ^k (-1) ^{a-b} \binom{a-b}{k-b} (\partial \bar{\partial}\xi)^{a-k} \nonumber 
\end{eqnarray}
From this, the lemma follows.
\end{proof}
\indent At this juncture we may compute the Bando-Futaki invariants using a generating series version of Liu's approach. Our basic strategy of proof is the same as Liu's, in that we shall not compute the invariant directly. Instead, we  observe that $i_{Y} (c(\Theta))-i_Y(H(c(\Theta))) - \bar{\partial}(i_Y (\partial f)) =0$ (where $c$ and $f$ are the Chern and the Futaki polynomials respectively). This shall be rewritten as $\bar{\partial} \eta = 0$ and after that we shall find the harmonic part of $\eta$ to finally compute the integral.  In the course of the proof we use lemma \ref{Algebra} repeatedly. \\ \textbf{}\\
\emph{Proof of theorem \ref{Liuthm}}:\\
\indent First, we recall that $\det (I+t\Theta) = \frac{(1+t\omega )^{n+1}}{1+t(\omega d + \partial \bar{\partial} \xi )}$. The harmonic part of the same maybe obtained by putting $\xi=0$. Hence,
\begin{eqnarray}
c - Hc &=& (1+t\omega)^{n+1} (\frac{1}{1+t(\omega d + \pbp \xi)} - \frac{1}{1+t\omega d} ) \nonumber \\
&=& -t\pbp (\frac{\xi (1+t\omega)^{n+1}}{(1+t(\omega d + \pbp \xi))(1+t\omega d)}) \nonumber \\
&=& \pbp f \nonumber 
\end{eqnarray}
In what follows, $\theta$ is the ``Hamiltonian" function \cite{Liu} such that $i_Y \omega = -\bar{\partial} \theta$. We shall use the fact that $i_Y$ is a derivation (and hence the quotient and the product rules for derivatives maybe used when interpreted suitably).  \\
\begin{align}
i_Y (Hc) &= \frac{(n+1)(1+t\omega)^nti_Y (\omega) (1+t\omega d) - ti_Y (\omega) d (1+t\omega)^{n+1}}{(1+t\omega d)^2} \nonumber \\
&= \bar{\partial}(\frac{t(1+t\omega)^n \theta (d-(n+1)-nt\omega d)}{(1+t\omega d)^2}) \nonumber \\
&= \bar{\partial} \alpha _2 \nonumber \\
(I+t\Theta)^{-1} &= \frac{1}{1+t\omega}(\delta_{ij}+\frac{t}{1+t\omega}(v_i w_j + \alpha _i  \beta _j))^{-1}\nonumber \\
&= \frac{1}{1+t\omega}(\delta_{ij}+\frac{t}{1+t\omega}v_i w_j)^{-1} (\delta_{ij} + \frac{t}{1+t\omega}((\delta_{ab}+\frac{t}{1+t\omega}v_a w_b)^{-1})_{ik}\alpha _k \beta _j)^{-1}\nonumber 
\end{align}
Using lemma \ref{Algebra}, and noticing that $w_k v_k = -\omega$ and $\beta_k \alpha _k = (d-1)\omega + \partial \bar{\partial} \xi$ we see that
\begin{gather}
((I+t\Theta)^{-1})_{ab} = \frac{1}{1+t\omega} (\delta_{ac}-t v_{a} w_{c})(\delta_{cb}-\frac{t(\alpha _c \beta _b -t v_c  w_k  \alpha _k \beta _b) }{1+t\omega d + t \partial \bar{\partial} \xi - t^2  \beta _k v_k   w_l \alpha _l }) \nonumber 
\end{gather}
\begin{align}
i_Y (c) &= \det (I+t\Theta)\mathrm{tr}(ti_Y (\Theta) (I+t\Theta)^{-1}) \nonumber \\
&= -t \bar{\partial} ( \det (I+t\Theta) \mathrm{tr} (\nabla Y (I+t\Theta)^{-1}))  
\label{Temp} 
\end{align}
We use the following equations from \cite{Liu}
\begin{eqnarray}
(\nabla Y)^{l}_{k} &=& -\widetilde{g}^{lj}\partial_{k}\bar{\partial}_j \theta \nonumber \\
\Phi &=& -\frac{1}{\rho}Y^{l}_{;k} \frac{\partial a_l}{\partial z^p}\frac{\partial \bar{a}_s}{\partial \bar{z}^q}\widetilde{g}^{k\bar{s}}dz^p \wedge d\bar{z}^q \nonumber \\
&=& \mathrm{div}(Y) ((d-1)\omega + \partial \bar{\partial}\xi) -\partial \bar{\partial}\theta  + \partial \bar{\partial}\Delta \theta \nonumber \\
&-& (n+1)\theta ((d-1)\omega + \partial \bar{\partial}\xi) \nonumber 
\end{eqnarray}
Upon simplification of equation \ref{Temp} (recall that since $a_i = \frac{\partial z_1}{\partial z_i}$, $w_k \alpha _k =0$) we get
\begin{align}
i_Y (c) &= -t \bar{\partial} (\frac{(1+t\omega)^n}{1+t\omega d + t\pbp \xi} (\mathrm{div}(Y)+t\bar{\partial}\partial \theta - \frac{t\Phi}{1+t\omega d + t \partial \bar{\partial} \xi} )) \nonumber \\
&= \bar{\partial} \alpha _1 \nonumber \\
i_Y (\partial f) 
 &=  t\frac{-Y(\xi)(1+t\omega)^{n+1}}{(1+t(\omega d + \pbp \xi))(1+t\omega d)} \nonumber \\
 &- \frac{t^2(1+t\omega)^{n+1}\partial \xi}{(1+t(\omega d + \pbp \xi))^2(1+t\omega d)^2}\bar{\partial}[\theta((n+1-d+nt\omega d)(1+t(\omega d + \pbp \xi)) \nonumber \\
&-(1+t\omega)(1+t\omega d)d) + Y(\xi) ((1+t\omega)(1+t\omega d) )]\nonumber 
\end{align} 
It is easy to see that for an appropriate form $\gamma$, we have,
\begin{gather}
\alpha _1 - \alpha _2 - i_Y (\partial f) 
= \bar{\partial} \gamma 
+ \frac{t(1+t\omega)^n}{(1+t\omega d)^2}\theta (nt\omega d + n+1-d)  -i_Y (\partial f) \nonumber \\
 -t \frac{(1+t\omega)^n}{1+t(\omega d + \pbp \xi)^2}(\mathrm{div}(Y)(1+t\omega) + (n+1)t\theta ((d-1)\omega+\pbp \xi)) \nonumber 
\end{gather}
We shall use this identity \cite{Lu},
\begin{eqnarray}
\mathrm{div}(Y)-Y(\xi)-(n-d+1)\theta &=& -\kappa \nonumber  
\end{eqnarray}
Replacing $\mathrm{div}(Y)$ by the above identity and simplifying we have,
\begin{align}
&\alpha _1 - \alpha _2 - i_Y (\partial f) = \bar{\partial}\gamma + t^2 \frac{\kappa (1+t\omega)^{n+1}}{(1+t(\omega d + \pbp \xi))^2} \nonumber \\
 &-\bar{\partial} \Bigg( \frac{t^2(1+t\omega)^{n+1}\partial \xi}{(1+t(\omega d +\pbp \xi))^2(1+t\omega d)} \nonumber \\
&\times\Big[\frac{\theta ((n+1-d+nt\omega d)(1+t(\omega d +\pbp \xi))-(1+t\omega)(1+t\omega d)d)}{1+t\omega d} 
 + (1+t\omega)Y(\xi)\Big]\Bigg) \nonumber
\end{align}
Thus, the harmonic part is $ t^2 \frac{\kappa (1+t\omega)^{n+1}}{(1+t(\omega d + \pbp \xi))^2}$. Notice that (the integral of a non-top form is defined to be zero)
\begin{align}
\int _M L_Y f \wedge \frac{1}{1-\omega} &= \int _M (di_Y + i_Y \partial) f \wedge \frac{1}{1-\omega} \nonumber \\
&= \int _M (\alpha_1 - \alpha_2 - t^2 \frac{\kappa (1+t\omega)^{n+1}}{(1+t(\omega d + \pbp \xi))^2}) \wedge \frac{1}{1-\omega}  \nonumber 
\end{align}
\begin{align}
= \int _M (\alpha_1 - \frac{t(1+t\omega)^n \theta (d-(n+1)-nt\omega d)}{(1+t\omega d)^2} - t^2 \frac{\kappa (1+t\omega)^{n+1}}{(1+t\omega d)^2}) \wedge \frac{1}{1-\omega} \nonumber 
\end{align}
where Stokes' theorem was used to deduce that $\int _M di_Y f \wedge \frac{1}{1-\omega} = 0$, to replace $1+t(\omega d + \pbp \xi)$ by $1+t\omega d$ and to ignore the integral of the anharmonic part of $\alpha_1 - \alpha _2 - i_Y(\partial f)$. \\
\indent From lemma $2.6$ of \cite{Liu}, it follows that $\int _M \frac{\alpha_1}{1-\omega} = 0$. After replacing $t$ by $\frac{\sqrt{-1}}{2\pi}$, one may easily compute the integral using the facts that $\int _M \theta \omega ^{n-1} = \frac{\kappa}{n}$ and $\int \omega ^{n-1} = d$. This completes the proof.  \qed

\end{document}